\newtheorem{theorem}{Theorem}[section]
\newtheorem{lemma}[theorem]{Lemma}
\newtheorem{proposition}[theorem]{Proposition}
\newtheorem{corollary}[theorem]{Corollary}
\theoremstyle{definition}
\newtheorem{definition}[theorem]{Definition}
\newtheorem{remark}[theorem]{Remark}
\newcommand{\R}{{\mathbb{R}}}
\newcommand{\Z}{{\mathbb{Z}}}
\newcommand{\C}{{\mathbb{C}}}
\newcommand{\CP}{{\mathbb{CP}}}
\newcommand{\RP}{{\mathbb{RP}}}
\newcommand{\bS}{\mathbb{S}}
\newcommand{\ehzcap}{c_{_{\rm EHZ}}}
\newcommand{\gw}{\underline{c}}
\def\CC{\mathbb{C}}
\def\RR{\mathbb{R}}
\newcommand{\deta}{\dot{\eta}}
\newcommand{\bigzero}{\mbox{\normalfont\Large\bfseries 0}}
\def\eps{{\varepsilon}}
\begin{document}

\title {Quantitative Results on  Symplectic Barriers}
	\author{Pazit Haim-Kislev, Richard Hind, Yaron Ostrover}
	\maketitle
	\begin{abstract}
In this paper we present some quantitative results concerning symplectic barriers. In particular, we answer a question raised by Sackel, Song, Varolgunes, and Zhu regarding the symplectic size of the $2n$-dimensional Euclidean ball with a codimension-two linear subspace removed.    
\end{abstract}

\section{Introduction and Results} \label{sec-introduction}

In a recent work~\cite{HKHO}, we established a new type of rigidity for symplectic embeddings that originates from obligatory intersections with symplectic submanifolds.
Inspired by the terminology introduced by Biran~\cite{B} for analogous results regarding Lagrangian submanifolds, we refer to such symplectic obstructions as symplectic barriers.
More precisely, if $(M, \omega)$ is a $2n$-dimensional symplectic manifold, a symplectic submanifold $S \subset M$ is said to be a symplectic barrier if
$$ c(M \setminus S, \omega) <  c(M,\omega),$$
where $c$ is some (normalized) symplectic capacity. Recall that symplectic capacities are numerical invariants that, roughly speaking, measure the size of a symplectic manifold (see, e.g., Chapter 2 of~\cite{HZ1}). More precisely, 
\begin{definition} \label{def-capacity}
	A  symplectic capacity is a map which associates to every symplectic manifold $(M,\omega)$ an element of $[0, \infty]$ with the following properties:
	\begin{itemize}
	\item  $c(M,\omega) \leq c(N, \tau)$ if $(M,\omega)  \overset{\mathrm s}{\hookrightarrow } (N,\tau)$ (Monotonicity)
	\item  $c(M,\alpha \omega) = |\alpha| c(M,\omega)$ for all $\alpha \in \R$, $\alpha \neq 0$ (Conformality)
	\item $c(B^{2n}(1)) = 1 = c(Z^{2n}(1)) $ (Nontriviality and Normalization)
	\end{itemize}
\end{definition}
\noindent Here $B^{2n}(r)$ denotes the $2n$-dimensional Euclidean ball with radius $\sqrt{r/\pi}$, $Z^{2n}(r)$  denotes the cylinder $B^2(r) \times \R^{2n-2}$,
and $\overset{\mathrm s}{\hookrightarrow }$ stands for a symplectic embedding. Note that both the ball $B^{2n}(r)$ and the cylinder $Z^{2n}(r)$ are equipped with the standard symplectic form $\omega = dx \wedge dy$ on ${\mathbb R}^{2n}$. 
Two examples of  symplectic capacities, which naturally arise from Gromov's celebrated non-squeezing theorem~\cite{Gr},
are the Gromov width and the cylindrical capacity:
\begin{align*}
\gw(M) = \sup \{ r : B^{2n}(r) \xhookrightarrow{\mathrm s} M \}, \ \
\overline{c}(M) = \inf \{r : M \xhookrightarrow{s} Z^{2n}(r) \}.
\end{align*}
Another important example is the Hofer-Zhender capacity $c_{_{\rm HZ}}$, which is closely related with Hamiltonian dynamics (see, e.g.,~\cite{HZ}). 
It follows immediately from Definition~\ref{def-capacity} that  $\gw(M,\omega) \leq c(M,\omega) \leq \overline{c}(M,\omega)$ for any symplectic capacity $c$.
For more information on symplectic capacities, see e.g., the survey~\cite{CHLS}. 

\medskip

In this paper we present some quantitative results concerning symplectic barriers. Our first result answers a question raised in Section 6.2 of~\cite{SSVZ} regarding the capacity of the $2n$-dimensional Euclidean ball in ${\mathbb R}^{2n}$ with a codimension-two linear subspace removed, where the classical K\"ahler angle is used to measure the ``defect" of the subspace from being complex. More precisely, let $E_t \subset {\mathbb R}^{2n}$ be a codimension-two linear subspace with  K\"ahler angle $t$, i.e, the unit outer normals $n_1 \perp n_2$ to $E_t$ satisfy $|\omega(n_1,n_2)| = t$.  The following result  
shows that the $(2n-2)$-ball $E_t \cap B^{2n}(1)$ is a symplectic barrier in $B^{2n}(1)$  when $0 \leq t <1$.

\begin{theorem} \label{thm-complement-plane} 
Let $n>1$ and $0 \leq t \leq 1$.
For any symplectic capacity $c$ 
one has
$$c(B^{2n}(1) \setminus E_t) = {\dfrac {1+t}{2}}.$$
\end{theorem}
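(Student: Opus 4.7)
Since every normalized symplectic capacity $c$ satisfies $\gw \le c \le \overline{c}$, it suffices to establish the two matching inequalities
\[
\gw(B^{2n}(1)\setminus E_t) \ge \frac{1+t}{2}
\quad\text{and}\quad
\overline{c}(B^{2n}(1)\setminus E_t) \le \frac{1+t}{2}.
\]
For each $\varepsilon>0$, I would construct an explicit symplectic embedding $B^{2n}((1+t)/2-\varepsilon)\hookrightarrow B^{2n}(1)\setminus E_t$ for the lower bound, and a symplectic embedding $B^{2n}(1)\setminus E_t\hookrightarrow Z^{2n}((1+t)/2+\varepsilon)$ for the upper bound. Both constructions take place in Darboux coordinates adapted to $E_t$: pick $n_1=\partial_{x_1}$ and $n_2=t\,\partial_{y_1}+\sqrt{1-t^2}\,\partial_{x_2}$ as an orthonormal frame of $E_t^\perp\cap\R^4$, so that $\omega(n_1,n_2)=t$ and $E_t\cap\R^4 = \{x_1=0,\ ty_1+\sqrt{1-t^2}\,x_2=0\}$, with the plane extending trivially in $\R^{2n-4}$. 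This parametrization interpolates between a Lagrangian plane at $t=0$ and a complex line at $t=1$, and makes the K\"ahler angle appear explicitly in all subsequent computations.

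\paragraph{The two embeddings.}
For the lower bound, introduce polar coordinates $(\rho,\phi)$ on $E_t^\perp\setminus\{0\}$ (so $\rho=0$ corresponds to $E_t$), together with conjugate coordinates $(P_1,\tilde P_2)$, so that $\omega=d\rho\wedge dP_1+d\phi\wedge d\tilde P_2$ is in Darboux form on $\R^4\setminus E_t$ (paired with the standard form on $\R^{2n-4}$). In these coordinates the ball $B^{2n}(1)$ is described by a single $t$-dependent inequality of the form $\rho^2+P_1^2+\tilde P_2^2/\rho^2+(\text{extra})\le 1/\pi$, and a symplectic embedding of $B^{2n}(r)$ is obtained by centering its image at $(\rho_0,0,0,0)$ with $\rho_0>0$ and wrapping around $E_t$ in the $\phi$-direction. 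Optimizing over $\rho_0$ and the wrapping rate gives $r$ arbitrarily close to $(1+t)/2$. For the upper bound, the same polar system identifies $B^{2n}(1)\setminus E_t$ with a subset of a cylinder whose width equals the symplectic area of the $E_t^\perp$-slice of the ball, which contributes $t$, plus an additional $1/2$ from the half-plane structure of the radial direction, summing to $(1+t)/2$.

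\paragraph{Main obstacle.}
The core difficulty lies in the lower bound: naive linear embeddings (translations, shears) yield at most capacity $1/4$, which is strictly less than $(1+t)/2$ for every $t>-1/2$. To reach the sharp bound one must use a genuinely non-linear symplectic embedding that wraps around $E_t$ using the polar coordinates, and then perform a delicate optimization in which the K\"ahler angle $t$ enters the constraint equations through both the defining inequality of the ball and the allowed range of $\rho_0$ and the wrapping rate. The two extreme cases---$t=1$, where $E_1$ is a complex line and $B^{2n}(1)\setminus E_1$ is a toric domain (simplex minus a face) with Gromov width $1$, and $t=0$, where $E_0$ is coisotropic and the width is $1/2$ by a Lagrangian-barrier argument---serve as useful sanity checks. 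Interpolating smoothly between them via the polar unwrap, and matching the sharp constant $(1+t)/2$ in both directions, is the technical heart of the theorem.
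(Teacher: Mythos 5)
The overall strategy---sandwiching $c$ between $\gw$ and $\overline{c}$ and proving the two matching bounds---is the same as the paper's, but both of your constructions are asserted rather than carried out, and the specific route you propose has concrete problems. For the lower bound, the Darboux polar coordinates $(\rho,P_1,\phi,\tilde P_2)$ you posit on a neighborhood of $E_t$ do not exist in the form described: the restriction of $\omega$ to the normal plane $E_t^{\perp}$ is $t$ times the Euclidean area form, so for $t<1$ the angle around $E_t$ is not conjugate to a momentum in any standard way, and at $t=0$ the normal plane is Lagrangian, so there is no symplectic ``wrapping'' at all---yet the answer $1/2$ must still be achieved. The claimed normal form of the ball, $\rho^2+P_1^2+\tilde P_2^2/\rho^2+\cdots\le 1/\pi$, is pattern-matched from the complex case $t=1$ and is not derived, and the ``optimization over $\rho_0$ and the wrapping rate'' that is supposed to produce exactly $(1+t)/2$ is precisely the content of the theorem and is nowhere performed. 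The paper's route is quite different and entirely concrete: after a unitary change of coordinates putting $E_t=P_t\times\C^{n-2}$ with $P_t$ inside the hyperplane $\{y_1=0\}$, the projection of $P_t\cap B^4(1)$ to the $z_2$-plane is a half-ellipse arc cutting the disc $D(1)$ into two pieces, one of area $\frac{1+t}{2}$; one embeds $B^4(\frac{1+t}{2})$ into $B^4(1)$ as a product $\mathrm{Id}\times\varphi$ with $\varphi$ area-preserving into that piece (with sublevel-set control via Schlenk's Lemma 3.1.5), and then displaces the image off $P_t$ by a Hamiltonian supported near the hyperplane. None of these ideas, nor a workable substitute, appears in your sketch.

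The upper bound is in worse shape: the one-sentence claim that the polar system ``identifies $B^{2n}(1)\setminus E_t$ with a subset of a cylinder'' of width $\frac{1+t}{2}$ is not an argument, and I do not see how to make it one. In the paper this is the hardest part of the proof. It requires showing that every compact subset of $B^4(1)\setminus P_t$ can be moved by a symplectomorphism of $\C^2$ into $B^4(1+t)\setminus L$ with $L$ a Lagrangian plane---a delicate two-step Hamiltonian construction with compatibility conditions along the boundary characteristics of $B^4(1+t)$---followed by an application of the nontrivial theorem of Sackel--Song--Varolgunes--Zhu that $B^4(1+t)\setminus L$ embeds symplectically into $Z^4(\frac{1+t}{2})$. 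Your proposal contains no substitute for either ingredient, so as it stands there is a genuine gap on both sides of the equality.
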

We remark that the complex case $t=1$  follows from 
Proposition 1.6 in~\cite{HKHO} (cf. Theorem 3.1.A in~\cite{MP}). For the Gromov width, the Lagrangian case $t=0$ follows from~\cite{B}, where it is proved that $\gw( \CP^n \setminus \RP^n) = {\frac 1 2}$. Moreover, one can check that Theorem 1.3 in~\cite{SSVZ} implies that $\gw (B^{2n}(1) \setminus E_0) = {\overline c}(B^{2n}(1) \setminus E_0) = {\frac 1 2}$.  

\medskip

Our next result concerns the symplectic barriers introduced in~\cite{HKHO}.
For $\varepsilon >0$ denote by $\Sigma_\eps$ the following union of symplectic codimension-two subspaces in ${\mathbb R}^{2n} \simeq \C^n$:
\begin{align*}
\Sigma_\eps := \bigcup \{(z_1,z_2,\ldots,z_n) \in \C^n : z_n \in \eps \Z^{2} \}.
\end{align*}
Moreover, define $\Sigma_\eps^t$ to be a linear image of $\Sigma_\eps$ such that the K\"ahler angle of the corresponding planes is $t$, i.e., $|\omega(n_1,n_2)| = t$, where   $n_1 \perp n_2$ are the unit outer normals to the subspaces in $\Sigma^t_{\eps}$. 
Note that any two such configurations of subspaces with the same K\"ahler angle  are unitarily equivalent.\
We also note that when $\varepsilon$ is sufficiently large, 
the intersection $B^{2n}(1) \cap \Sigma_\eps^t$ becomes $B^{2n}(1) \cap E_t$, where $E_t$ is a single codimension-two subspace of K\"ahler angle $t$ as above. Thus we are especially interested in the case when $\varepsilon$ is small. In~\cite{HKHO} it was proved that for small $\varepsilon$, the configurations $\Sigma_\eps^t$ are symplectic barriers of the ball $B^{2n}(1)$ with respect to any (normalized) symplectic capacity. Here we provide more precise bounds   for the symplectic size of the complement of $\Sigma^t_{\eps}$ in $B^{2n}(1)$  when $\varepsilon \rightarrow 0$. 

\begin{theorem} \label{Thm-union-of-planes}
    For any $t \in (0,1)$ and $n>1$
    $$ 
    \lim_{\eps \to 0} c_{_{\rm HZ}}(B^{2n}(1) \setminus \Sigma_\eps^t)
    =
    \lim_{\eps \to 0} \overline{c}(B^{2n}(1) \setminus \Sigma_\eps^t) = t. $$
\end{theorem}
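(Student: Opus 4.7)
Since $c_{_{\rm HZ}}(U) \leq \overline{c}(U)$ for every open $U \subset \R^{2n}$ (which follows from monotonicity applied to any embedding $U \hookrightarrow Z^{2n}(r)$ with $r > \overline{c}(U)$, together with $c_{_{\rm HZ}}(Z^{2n}(r)) = r$), it suffices to prove the one-sided bounds
$$\limsup_{\eps\to 0} \overline{c}(B^{2n}(1) \setminus \Sigma_\eps^t) \leq t \quad\text{and}\quad \liminf_{\eps\to 0} c_{_{\rm HZ}}(B^{2n}(1) \setminus \Sigma_\eps^t) \geq t.$$

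For the upper bound, the plan is to construct, for every $\delta > 0$ and sufficiently small $\eps$, an explicit symplectic embedding $B^{2n}(1) \setminus \Sigma_\eps^t \hookrightarrow Z^{2n}(t+\delta)$. The guiding observation is that the Euclidean-normal 2-plane $V$ to the planes of $\Sigma_\eps^t$ has K\"ahler angle $t$, so $\omega|_V = t \cdot dA_V$; consequently the Euclidean projection $\pi_V(B^{2n}(1))$ is a disk of Euclidean area $1$ but symplectic area exactly $t$. I would build the embedding in three stages: (i) pass to coordinates adapted to the symplectic splitting $\R^{2n} = P \oplus P^{\omega\perp}$ (which differs from the Euclidean splitting $P \oplus V$ whenever $t<1$, since then $V \neq P^{\omega\perp}$); (ii) reduce to a two-dimensional area-preserving embedding of the punctured base disk of symplectic area $t$ into $B^2(t+\delta)$, which is feasible by a Moser-type argument since any bounded open subset of $\R^2$ embeds area-preservingly into a disk of slightly larger area; (iii) extend fiberwise, using that the fibers of $\pi_V$ inside the ball are uniformly bounded $(2n-2)$-balls that embed symplectically into $\CC^{n-1}$, absorbing the discrepancy between the two splittings into the error $\delta$.

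For the lower bound, the plan is to exhibit an admissible Hofer--Zehnder Hamiltonian $H_\eps \in C^\infty_c(B^{2n}(1) \setminus \Sigma_\eps^t)$ with $\max H_\eps \geq t - o(1)$. The natural candidate is $H_\eps(z) = h_\eps(\pi_V(z)) \cdot \chi(z)$, where $h_\eps \colon V \to \R$ is a compactly supported bump on the projected disk that vanishes on small neighborhoods of each lattice point in $\eps\Z^2$, and $\chi$ is a cutoff ensuring support inside $B^{2n}(1)$. On the 2-plane $V$ with its $t$-rescaled area form, one can arrange $h_\eps$ so that every nonconstant orbit of its Hamiltonian flow has period $>1$, while $\max h_\eps$ approaches the symplectic area $t$ of the projected disk (the loss near each puncture can be made $o(1)$ by choosing cutoff radii that shrink faster than $\eps$). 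A key calculation would verify that the lifted flow of $H_\eps$ on $\R^{2n}$ --- whose Hamiltonian vector field lies at every point in the symplectic-normal 2-plane $P^{\omega\perp}$ --- still has all nonconstant orbits of period $>1$, with the correct scaling giving $\max H_\eps \to t$.

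The main obstacle, I expect, is controlling the geometry caused by the mismatch between Euclidean and symplectic orthogonality when $t<1$: the Euclidean-normal plane $V$ differs from the symplectic-normal plane $P^{\omega\perp}$, so in either half of the proof the symplectic form exhibits off-diagonal ``mixed'' terms when expressed in the natural Euclidean $P \oplus V$ basis. In the upper bound, these must be shown to contribute only $o(1)$ when absorbed into the cylindrical embedding; in the lower bound, they produce transverse components of the Hamiltonian flow that must be analyzed carefully to rule out unexpected short orbits. Resolving these difficulties will likely draw on the quantitative input developed in~\cite{HKHO} that already established $\Sigma_\eps^t$ as a symplectic barrier.
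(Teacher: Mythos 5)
Your reduction to the two one-sided bounds matches the paper's structure, but both halves of your plan stop exactly at the points you yourself flag as ``the main obstacle,'' and the mechanisms you propose are not the ones that make the theorem true. For the upper bound, a fiberwise construction over the Euclidean projection $\pi_V$ cannot be rescued by ``absorbing the discrepancy into $\delta$'': for fixed $t<1$ the symplectic form written in the Euclidean splitting $P\oplus V$ has cross terms of size $\sqrt{1-t^2}$, which are not small, and more importantly you offer no mechanism by which deleting the $\eps$-lattice of parallel planes allows one to squeeze below $\tfrac{1+t}{2}$ --- Theorem~\ref{thm-complement-plane} shows that removing a single such plane only reaches $\tfrac{1+t}{2}$, so the gain from $\tfrac{1+t}{2}$ down to $t+\delta$ must come from the multiplicity of the planes. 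The paper's mechanism is linear stretching: apply the map $A^L$ dilating the $z_n$-coordinate, invoke the embedding result of~\cite{HKHO} (for $\eps L$ small, the complement of $\Sigma_\eps$ in $M_tB^{2n}(1)$ embeds into a slightly enlarged copy of the stretched ellipsoid $A^LM_tB^{2n}(1)$), and then prove the new quantitative input, Proposition~\ref{upper_bound}, via Clarke's dual action principle: $\lim_{L\to\infty}\ehzcap(A^LK)\le \ehzcap(K\cap\{z_n=0\})$, the slice being an ellipsoid of capacity $t$. Nothing in your sketch plays the role of this limiting statement.

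For the lower bound, the candidate $H_\eps=h_\eps(\pi_V(z))\,\chi(z)$ is not shown to be admissible, and there are concrete reasons to doubt it. Without the cutoff, the Hamiltonian vector field of $h_\eps\circ\pi_V$ lies in $P^{\omega}$, so the dynamics is foliated by affine translates of $P^{\omega}$; on each leaf you face a two-dimensional function that must vanish on an $\eps$-dense lattice of punctures yet reach a maximum near $t$ with every nonconstant period exceeding $1$, and near each puncture the function must climb from $0$, which is exactly where short orbits threaten to appear --- this is the whole difficulty and you defer it. Multiplying by $\chi$ then destroys the leaf structure entirely (the vector field acquires components along $\nabla\chi$), so the ``key calculation'' you postpone is not of the form you describe. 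The paper circumvents all of this: Proposition~\ref{lem-lower-bound-intersection} symplectically isotopes the planes of $\Sigma_\eps^t$ to the boundary so that the fixed convex body $(1-\delta)(AB^{2n}(1)\cap W^{2n})$ embeds in the complement, and Proposition~\ref{prop-EHZ-capacity-intersection} computes $\ehzcap$ of that body to be exactly $t$ by classifying the closed characteristics on its boundary --- a genuinely $2n$-dimensional analysis in which the competing orbits alternate between the two boundary strata. In short, your proposal identifies the correct target value $t$ and the correct heuristic (the K\"ahler-angle rescaling of area), but neither inequality is proved, and the lower-bound route as stated is unlikely to close.
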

We suspect that Theorem~\ref{Thm-union-of-planes} also holds for the Gromov width. This is supported by the following claim that provides an almost exact lower bound:
\begin{theorem} \label{thm-GW-capacity-Gromov-width} For any $t \in (0,1)$, any $\varepsilon > 0$, and $n>1$ 
     $$f(t) \leq \underline{c}(B^{2n}(1) \setminus \Sigma_\eps^t),$$
     where $f(t)$ is an explicit function given by \eqref{the-lower-bound-function} below and satisfies $f(t) \geq t-0.07$.
\end{theorem}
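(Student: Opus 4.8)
The plan is to produce an explicit symplectic embedding of a ball $B^{2n}(f(t))$ into $B^{2n}(1)\setminus\Sigma_\eps^t$; since the Gromov width $\gw$ is the supremum of the sizes of balls that embed symplectically, and since $\Sigma_\eps^t \subset \Sigma_1^t$ means removing more points for large $\eps$ is not what we want here — rather, for fixed $\eps$ the complement $B^{2n}(1)\setminus\Sigma_\eps^t$ shrinks as $\eps\to 0$, so the construction must be uniform in $\eps$. First I would reduce to the case $n=2$: a ball embedding in $B^4(f(t))\setminus(\Sigma_\eps^t \cap B^4)$ can be crossed with the identity in the remaining $\C^{n-2}$ factors, because $\Sigma_\eps^t$ only constrains the last coordinate $z_n$. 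Thus it suffices to symplectically embed $B^4(f(t))$ into $B^4(1)$ avoiding the preimage of the $\eps$-lattice $\eps\Z^2 \subset \C$ under the linear map realizing K\"ahler angle $t$ in the $z_2$-plane.

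Next I would exploit that a linear symplectomorphism can be used to normalize the configuration: after a symplectic linear change of coordinates the subspaces of $\Sigma_\eps^t$ become $\{z_2 \in \Lambda\}$ for a lattice $\Lambda$ in $\C$ of covolume proportional to $\eps^2$ whose shape is governed by $t$ — the K\"ahler angle $t<1$ forces the linear map to be non-unitary, and the relevant invariant of the image lattice (its systole, or the ratio of its two successive minima) is what produces the factor $t$ rather than $1$. Concretely, I expect that $\Sigma_\eps^t$, viewed in Darboux coordinates, looks like the product of $\C$ (the $z_1$-direction, unconstrained) with $\C \setminus (\text{lattice of covolume} \sim \eps^2/t)$, together with a ``tilting'' so that the relevant obstacle is a family of affine symplectic planes whose $z_1$-slices miss a region of area $t$. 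The core of the argument is then a \emph{uniform-in-$\eps$} version of the ball-packing/ball-into-a-punctured-region construction: one writes $B^4(a)$ as (a limit of) a union of products $B^2 \times B^2$ and uses the symplectic folding or the multiple symplectic blow-up technology — or, more elementarily, the explicit embeddings of ellipsoids into $B^4$ minus a complex line due to \cite{HKHO} and \cite{SSVZ} — interpolating between the $t=1$ case (capacity $1$, from Proposition 1.6 of \cite{HKHO}) and the $t=0$ Lagrangian case (capacity $1/2$, from \cite{B}). The explicit function $f(t)$ in \eqref{the-lower-bound-function} presumably arises as the optimal parameter in a one-parameter family of such interpolating embeddings, and the estimate $f(t)\ge t-0.07$ is then a calculus exercise bounding that function from below.

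I would carry out the steps in this order: (1) reduce to $n=2$ by the splitting argument; (2) put $\Sigma_\eps^t \cap B^4(1)$ into a normal form via a linear symplectomorphism, identifying the geometric quantity that degenerates from $1$ to $t$; (3) construct, for each $\eps>0$, an explicit symplectic embedding $B^4(f(t)) \hookrightarrow B^4(1)\setminus \Sigma_\eps^t$ by adapting the ellipsoid/polydisc embeddings into the complement of a complex line, making sure the embedded ball stays in a fundamental domain of the lattice and away from all its translates; (4) verify the image avoids $\Sigma_\eps^t$ for all $\eps$, which is where the $t$-dependence enters; (5) extract the formula \eqref{the-lower-bound-function} and prove $f(t)\ge t-0.07$. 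The main obstacle will be step (3)–(4): ensuring the embedding is genuinely uniform in $\eps$ as $\eps\to 0$, since naively the fundamental domain collapses, so the embedded ball must be engineered (e.g. by a shearing/rescaling in the $z_1\leftrightarrow z_2$ directions that exploits the non-compactness of $\C$ in the $z_1$-slot) so that its projection to the last coordinate always lands strictly inside a single lattice cell. A secondary difficulty is checking that the construction, which is tailored to the punctured region, still fits inside $B^4(1)$ with the claimed radius $f(t)$ rather than something smaller — this is the point at which the K\"ahler angle must be used quantitatively rather than just qualitatively.
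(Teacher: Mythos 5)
There is a genuine gap at the heart of your steps (3)--(4), and it is not a technicality that can be patched within your framework. You propose to embed $B^{2n}(f(t))$ so that ``its projection to the last coordinate always lands strictly inside a single lattice cell.'' This is impossible uniformly in $\eps$: if the image of a symplectically embedded ball projects (along the removed planes) into a single cell $U$ of the lattice, then the ball embeds symplectically into $U\times\R^{2n-2}$, which, after the linear normal form, is symplectomorphic to a cylinder of capacity comparable to ${\rm Area}(U)=O(\eps^2)$. Non-squeezing then forces the capacity of the ball to be $O(\eps^2)\to 0$, whereas $f(t)$ is bounded away from zero. Your proposed remedy (shearing/rescaling in the unconstrained $z_1$-directions) does not help, because it cannot shrink the projection onto the transverse $z_n$-plane. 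More generally, any strategy that tries to dodge the planes ``cell by cell'' runs into this obstruction; the whole point of Theorems~\ref{Thm-union-of-planes} and~\ref{thm-GW-capacity-Gromov-width} is that for small $\eps$ the embedded ball must intersect the fibers over many lattice points, which is why the answer drops from $\frac{1+t}{2}$ to roughly $t$. A secondary issue: your reduction to $n=2$ by ``crossing with the identity'' produces a product $B^4\times\C^{n-2}$, not a $2n$-ball; one needs the suspension trick of Proposition~\ref{c3n} (cf.~\cite{bh}) to upgrade a $4$-dimensional embedding to a $2n$-dimensional one.

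The actual argument is structurally different and does not attempt to avoid the lattice fibers at all. First, Proposition~\ref{lem-lower-bound-intersection} constructs a Hamiltonian isotopy, supported near rays emanating from the origin in the $z_n$-plane, that pushes every plane of $\Sigma_\eps$ (other than, and including, the one through the origin) into an arbitrarily small neighborhood of the boundary of the convex, $\eps$-independent domain $AB^{2n}(1)\cap W^{2n}$, where $A$ is a linear symplectomorphism with $A\Sigma_\eps^t=\Sigma_\eps$ and $W^{2n}$ is the cylinder over the slice $\{z_n=0\}\cap AB^{2n}(1)$. The inverse isotopy then embeds $(1-\delta)(AB^{2n}(1)\cap W^{2n})$ into $B^{2n}(1)\setminus\Sigma_\eps^t$, reducing the theorem to a lower bound for the Gromov width of a fixed convex body (an ellipsoid intersected with a non-complex cylinder). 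Second, that lower bound is obtained by an explicitly optimized \emph{linear} embedding: one considers a two-parameter family of symplectic matrices $S=S(d_1,d_2)$ and chooses $d_1,d_2$ so that the largest $r$ with $SB^{2n}(r)\subset B^{2n}(1)$ equals the largest $r$ with $SB^{2n}(r)\subset A^{-1}W^{2n}$; the optimum yields exactly \eqref{the-lower-bound-function}, and $f(t)\ge t-0.07$ is then the advertised calculus check. So the function $f$ does not come from interpolating between the $t=1$ and $t=0$ embeddings of a ball into the complement of a single plane, but from a linear optimization inside a convex intersection.
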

While Theorem \ref{thm-complement-plane} shows that the complement of a single codimension-two linear subspace with K\"ahler angle $t$ has capacity $\frac{1+t}{2}$, the  two theorems above show that the symplectic size of 
the complement of a large number of such spaces 
is strictly smaller, and takes a value around the K\"ahler angle $t$.

\medskip 

\noindent {\bf Acknowledgements:} 
We are  grateful to Yael Karshon for numerous enlightening discussions, particularly for her generous sharing of ideas concerning the proof of Proposition~\ref{lem-lower-bound-intersection}, which is crucial for the results in Section 3. 
P. H-K. and Y.O.  were partially supported by the ISF grant No.~938/22, and R.H. by the Simons Foundation grant No. 663715.

\section{The Complement of a Single Subspace} \label{sec-removing-2-codim-hyperplane}
In this section we prove Theorem~\ref{thm-complement-plane}. We first introduce  the following notations. 
Equip  ${\mathbb C}^n \simeq {\mathbb R}^{2n}$ with coordinates 
$(z_1,\ldots,z_n)$, where $z_j = x_j +i y_j$, 
and with the standard symplectic form $\omega = dx \wedge dy$. Let $P_t$ be the real two-dimensional plane in $\CC^2$ spanned by the two vectors $(s,t)$ and $(0,i)$, where $t,s$ are positive real numbers satisfying $t^2 + s^2 = 1 $. It is not hard to check that using a  linear unitary transformation in ${\mathbb C}^n$, we can assume without loss of generality that  any codimension-two linear space $E_t \subset {\mathbb R}^{2n}$ with  
 K\"ahler angle $t$ is of the form $P_t \times {\mathbb C}^{n-2}$. This implies that the proof of Theorem~\ref{thm-complement-plane} is, roughly speaking, four-dimensional. 

\medskip
The strategy for the proof of Theorem~\ref{thm-complement-plane} is as follows: first we prove the required lower bound for the Gromov width by an explicit embedding of a 4-dimensional ball in $B^4(1) \setminus P_t$ (Proposition~\ref{c3}), and then  extend the argument to any dimension in Proposition~\ref{c3n}. Next, in Proposition~\ref{thm-embedding-to-ball-with-Lagrangian-removed}
we develop the main ingredient needed for the required upper bound for the cylindrical capacity, which in turn is proved in Proposition~\ref{prop-cylindrical-upper-bnd}.

 \medskip 
We start with some preparation. First, note that the plane $P_t$ lies in the hyperplane $\Sigma := \{ y_1=0 \}$. Moreover, let ${\rm proj}_{z_k} : \CC^2 \to \CC$ be the  projection onto the $z_k$-plane, for $k=1,2$. Then, one has $${\rm proj}_{z_1}(P_t \cap B^4(1)) = \{  x_1 \in [-s,s], \, y_1=0\} \ {\rm and}  \ {\rm proj}_{z_2}(P_t \cap B^4(1))=E,$$ where $E$ is the ellipse with axes $ [-t/\sqrt{\pi}, t/\sqrt{\pi}] \times \{0\}$, $\{0\} \times [-1/\sqrt{\pi}, 1/\sqrt{\pi}]$, and area $t$ (see Figure~\ref{figure-projections}).

\begin{figure}[ht]
\centering
	\includegraphics[width=0.7\textwidth]{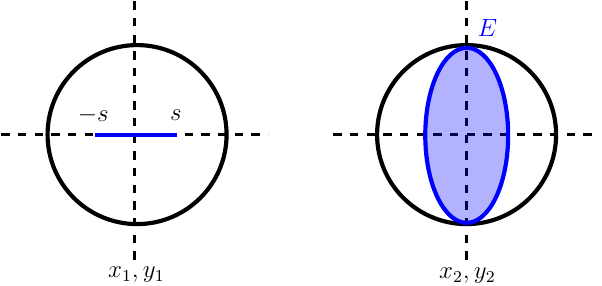}
 \caption{The projections of $P_t \cap B^4(1)$.}
 \label{figure-projections}
 \end{figure}

\vskip 5pt

Set $E^+ = E \cap \{x_2 \ge 0\}$, $E^- = E \cap \{x_2 \le 0\}$, $\partial E^+ = \partial E \cap \{x_2 \ge 0\}$, and  $\partial E^- = \partial E \cap \{x_2 \le 0\}$. 
Note that any subset $U$ of the intersection $\Sigma \cap B^4(1)$ satisfying   ${\rm proj}_{z_2}(U) \cap \partial E^+ = \emptyset$ can be displaced from $P_t$ using a Hamiltonian diffeomorphism of $\Sigma \cap B^4(1)$ that sends points of the form $(x_1,0,x_2, y_2)$ to $(f(x_1, x_2, y_2), 0, x_2, y_2)$, with $f(x_1, x_2, y_2) \ge x_1$. This can be done, e.g., via  the following simple lemma.

\begin{lemma} \label{lem-pushing-along-vector-field} 
Let $N \subset (M, \omega)$ be a submanifold
 and let $X_N$ be a vector field on $N$ tangent to $\ker(\omega|_N)$ whose time-$1$ flow defines a diffeomorphism $\psi_f$ of $N$.
Then there exists a Hamiltonian diffeomorphism of $M$ which preserves $N$ and restricts to $\psi_f$ on $N$.
\end{lemma}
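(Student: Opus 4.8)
The plan is to build the Hamiltonian on $M$ by first constructing a suitable function on $N$ and then extending it off $N$, using the structure of a tubular neighborhood adapted to $\omega$. The starting observation is that since $X_N$ is tangent to $\ker(\omega|_N)$, the characteristic foliation of $(N,\omega|_N)$ is preserved by the flow $\psi_f$; moreover, along the leaves the restricted form is degenerate, which is exactly what allows us to realize $\psi_f$ as the time-$1$ map of a Hamiltonian flow \emph{on $N$} with respect to a presymplectic structure. Concretely, I would first treat the model case where $\omega|_N$ has constant rank (which is all we need, since in the application $N = \Sigma \cap B^4(1)$ is coisotropic and $\ker(\omega|_N)$ is a line field), write $N$ locally as a product of a symplectic factor and the characteristic direction, and observe that a vector field tangent to $\ker(\omega|_N)$ is automatically $\omega|_N$-Hamiltonian for a function $H_N$ that is constant along the symplectic factor. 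Globalizing this to all of $N$ via the hypothesis that $X_N$ integrates to a genuine diffeomorphism $\psi_f$ gives a smooth function $H_N \colon N \to \R$ with $X_{H_N}^{\omega|_N} = X_N$.

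The second step is to promote $H_N$ to a Hamiltonian $H$ on a neighborhood of $N$ in $M$ whose flow preserves $N$ and restricts there to $\psi_f$. For this I would invoke a coisotropic (or, in the general case, a Gotay-type presymplectic) neighborhood theorem: a neighborhood of $N$ in $M$ is symplectomorphic to a neighborhood of the zero section in a model total space $E \to N$ built from $\ker(\omega|_N)$ and its annihilator, carrying a canonical symplectic form that restricts to $\omega|_N$ on $N$. In these coordinates, take $H = \pi^* H_N$ composed with a cutoff in the normal directions equal to $1$ near $N$; since $dH$ annihilates the normal fibers along $N$, the Hamiltonian vector field $X_H$ is tangent to $N$ there, and its restriction to $N$ is precisely $X_{H_N}^{\omega|_N} = X_N$. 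Extending $H$ arbitrarily (with compact support) to all of $M$ and taking the time-$1$ flow then yields a Hamiltonian diffeomorphism of $M$ preserving $N$ and inducing $\psi_f$ on it.

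The main obstacle is the neighborhood-theorem step: one has to be careful that $\ker(\omega|_N)$ has locally constant rank so that a clean normal form exists, and that the extension of $H_N$ off $N$ can be arranged so that $X_H$ is \emph{exactly} tangent to $N$ rather than only tangent up to first order. Both are handled by choosing $H$ to be pulled back from $N$ near $N$ (so $dH|_N$ kills the normal directions identically) and by noting that in the intended application $N$ is coisotropic with a one-dimensional characteristic distribution, where the coisotropic neighborhood theorem applies directly. A further mild point is that $\psi_f$ must be the time-$1$ flow of a \emph{complete} (or compactly supported) vector field on $N$ for $H_N$ to be globally defined; in the applications $N$ is relatively compact, so this is automatic after a cutoff. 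Everything else — smoothness of $H_N$, the cutoff, and the verification that the restricted flow is $\psi_f$ — is routine.
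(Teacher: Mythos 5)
There is a genuine gap, and it occurs at the very first step: the function $H_N$ on $N$ that you propose to construct cannot carry the information of $X_N$. Since $X_N$ is tangent to $\ker(\omega|_N)$, one has $\iota_{X_N}(\omega|_N)=0$ identically, so the only functions $H_N$ on $N$ satisfying $\iota_{X_N}(\omega|_N)=dH_N$ are the locally constant ones; conversely, the presymplectic ``Hamiltonian vector field'' of such an $H_N$ is only defined up to an arbitrary section of the kernel, so the equation $X_{H_N}^{\omega|_N}=X_N$ determines nothing. Your extension $H=\pi^*H_N$ is then locally constant near $N$ and generates the zero flow. More structurally, the Hamiltonian you need behaves in exactly the opposite way from the one you build: by nondegeneracy of $\omega$, the $1$-form $\eta:=X_N\rfloor\omega$ along $N$ vanishes on $TN$ but is necessarily nonzero on directions transverse to $N$ wherever $X_N\neq 0$. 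Hence any $H$ with $X_H|_N=X_N$ must satisfy $dH|_N=\eta$, i.e.\ $H$ is constant \emph{on} $N$ and has prescribed nonzero derivative in the normal directions, whereas your $H=\pi^*H_N$ (cutoff in the fibers) has $dH|_N$ annihilating the normal directions and varying along $N$.

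The paper's proof is precisely this observation and nothing more: $\eta=X_N\rfloor\omega$ is a section of the conormal bundle of $N$, so one can choose $H:M\to\R$ vanishing on $N$ with $dH|_N=\eta$ (locally $H=\sum_j \eta(\partial_{n_j})\,n_j$ in tubular coordinates, glued by a partition of unity); then $X_H|_N=X_N$ by nondegeneracy of $\omega$, so the flow of $H$ is tangent to $N$ along $N$, preserves $N$, and restricts there to the flow of $X_N$, i.e.\ to $\psi_f$ at time $1$. In particular no constant-rank hypothesis, presymplectic normal form, or coisotropic neighborhood theorem is needed; those ingredients of your argument are not merely overhead but are propping up a construction whose core step fails.
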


\begin{proof}
As the $1$-form $\eta = X_N \rfloor \omega$ vanishes on $TN$, one can find a function $H : M \rightarrow {\mathbb R}$ which vanishes on $N$, and satisfies $dH = \eta$. Clearly the function $H$ generates the required  Hamiltonian diffeomorphism.
\end{proof}

With these preliminaries in place, we turn now to prove the required lower bound for the Gromov width in dimension four.

\begin{proposition}\label{c3} 
 For any $t \in (0,1)$ one has,
$$\underline{c}(B^4(1) \setminus P_t) \ge \frac{1+t}{2}.$$
\end{proposition}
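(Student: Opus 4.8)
The plan is to construct an explicit symplectic embedding of the ball $B^4(1+t)/2$ — or more precisely, of $B^4(r)$ for every $r < \tfrac{1+t}{2}$ — into $B^4(1) \setminus P_t$, and to do this by building a Hamiltonian isotopy that pushes $P_t$ entirely out of the region occupied by a suitably placed ball. The starting observation, already set up in the excerpt, is that $P_t \subset \Sigma = \{y_1 = 0\}$, with ${\rm proj}_{z_1}(P_t \cap B^4(1))$ a segment on the real axis and ${\rm proj}_{z_2}(P_t \cap B^4(1))$ the ellipse $E$ of area $t$. The idea is to exploit that $P_t$ is ``thin'' in the $y_1$-direction and occupies only half of the $z_1$-segment structure once we decompose $E = E^+ \cup E^-$.

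First I would use Lemma \ref{lem-pushing-along-vector-field} with $N = \Sigma \cap B^4(1)$ (on which $\ker(\omega|_N)$ is spanned by $\partial_{x_1}$): any portion of $P_t$ whose $z_2$-projection avoids $\partial E^+$ can be shoved in the $+x_1$ direction by a Hamiltonian diffeomorphism of $B^4(1)$ preserving $\Sigma$. Concretely, I would split $P_t$ according to the sign of $x_2$: the part with $x_2 \le 0$ projects into $E^-$, misses $\partial E^+$, and can be pushed toward $x_1 = s$; the part with $x_2 \ge 0$ is handled symmetrically (or by first applying a rotation) so that after the isotopy $P_t$ lies near $\{x_1 = \pm s, \, y_1 = 0\}$, i.e.\ clustered near the boundary sphere in the $x_1$-direction. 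This clears out a neighborhood of the ``core'' region $\{|x_1| \text{ small}\}$ inside the hyperplane, and by combining with the $y_1$-direction (where $P_t$ has zero extent) one obtains an open set containing a symplectic ball of the desired capacity. Quantitatively, the ball one can fit has the form of a region where $x_1$ ranges over an interval of length $\approx 1/\sqrt{\pi}$ minus the $2s/\sqrt{\pi}$ occupied by $P_t$'s projection, while the $z_2$-coordinate ranges over the complement of the half-ellipse — and the arithmetic of these areas should yield exactly $\tfrac{1+t}{2}$ in the limit, using $t^2 + s^2 = 1$.

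The honest way to package the resulting open set as containing a large ball is to identify a symplectomorphic copy of a product-type or ellipsoid-type domain and invoke known embedding results (e.g.\ embeddings of balls into ellipsoids, or the fact that $B^2(a) \times B^2(b)$ with the smaller disk removed from a slice contains a large ball). I would aim to show that $B^4(1) \setminus P_t$ contains, after the Hamiltonian isotopy, a set symplectomorphic to $\big(B^2(1) \setminus (\text{segment})\big) \times B^2(\cdot)$ or to an appropriate ellipsoid, whose Gromov width is computed directly; the factor $s = \sqrt{1-t^2}$ entering the width of the cleared $z_1$-region, together with the half-ellipse of area $t$ in the $z_2$-factor, is what produces $\tfrac{1+t}{2}$. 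Since $\gw$ is monotone under symplectic embeddings, a lower bound for the width of this explicit subdomain gives the claimed lower bound for $\gw(B^4(1) \setminus P_t)$.

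The main obstacle I anticipate is \emph{quantitative sharpness}: it is easy to see abstractly that $P_t$ can be displaced and hence that $B^4(1)\setminus P_t$ has positive Gromov width, but getting the constant to be exactly $\tfrac{1+t}{2}$ (rather than something smaller) requires choosing the pushing Hamiltonian and the location/shape of the embedded ball with care, so that no capacity is wasted near $\partial E^+$ where the isotopy from Lemma \ref{lem-pushing-along-vector-field} degenerates, and so that the ball uses the full available room in \emph{both} the $z_1$- and $z_2$-directions simultaneously (a genuinely four-dimensional packing issue, not just a product construction). A secondary technical point is ensuring the Hamiltonian diffeomorphism of $\Sigma \cap B^4(1)$ extends to a Hamiltonian diffeomorphism of $B^4(1)$ that does not push the constructed ball outside $B^4(1)$ — this should follow from Lemma \ref{lem-pushing-along-vector-field} together with a cutoff supported away from $\partial B^4(1)$, but it needs to be checked that the cutoff does not shrink the usable region below $\tfrac{1+t}{2}$.
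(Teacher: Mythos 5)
Your starting point is the right one --- the displacement mechanism of Lemma \ref{lem-pushing-along-vector-field} in the $x_1$-direction inside $\Sigma$, and the special role of $\partial E^+$ --- but the proposal has a genuine gap precisely where you anticipate one, and one of your intermediate claims is actually false. The paper does not move $P_t$; it fixes a carefully shaped ball and moves the ball. Concretely, let $R$ be the component of $D(1)\setminus\partial E^+$ of area $\frac{1+t}{2}$ and $S_h:=R\cap D(1-h)$. One checks that ${\rm Area}(S_h)\ge \frac{1-h}{2}+\frac{t}{2}\sqrt{1-h}\ge \frac{1+t}{2}-h$, so Schlenk's Lemma 3.1.5 produces an area-preserving $\varphi:D(\frac{1+t}{2})\to R$ with $\varphi(D(\frac{1+t}{2}-h))\subset S_h$ for all $h$; then ${\rm Id}\times\varphi$ embeds $B^4(\frac{1+t}{2})$ into $B^4(1)$ with image ${\bf B}$ whose $z_2$-component lies in $R$, hence ${\rm proj}_{z_2}({\bf B}\cap\Sigma)\cap\partial E^+=\emptyset$, and the $x_1$-push then displaces ${\bf B}$ from $P_t$. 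This explicit fibered embedding is exactly the ``genuinely four-dimensional packing issue'' you identify as the main obstacle and leave unresolved; without it there is no argument that the constant $\frac{1+t}{2}$ is actually attained.

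Moreover, the step you do describe --- splitting $P_t$ by the sign of $x_2$ and pushing so that ``after the isotopy $P_t$ lies near $\{x_1=\pm s,\,y_1=0\}$, clustered near the boundary sphere'' --- cannot work. Over a point $z_2\in\partial E^+$ the unique point of $P_t$ in the fiber $\{|x_1|\le\rho(z_2),\,y_1=0\}$ is the right endpoint $x_1=\rho(z_2)$, while over $z_2\in\partial E^-$ it is the left endpoint $x_1=-\rho(z_2)$; both lie on $\partial B^4(1)$ and are fixed by any diffeomorphism of the closed ball, and a single continuous fiberwise push $f(x_1,z_2)$ must interpolate between these two extremes as $z_2$ traverses $E$, so some points of $P_t$ necessarily remain deep in the interior of $B^4(1)$ (e.g.\ near the middle of the central fiber over $z_2=0$). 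Indeed, if $P_t$ could be pushed into an arbitrarily small neighborhood of $\partial B^4(1)$, the complement would contain $B^4(1-\delta)$ for every $\delta>0$ and would have Gromov width $1$, contradicting the matching upper bound $\overline c(B^4(1)\setminus P_t)\le\frac{1+t}{2}$ established later in the paper. Relatedly, your area heuristic (subtracting the $x_1$-extent of ${\rm proj}_{z_1}(P_t)$ from an interval) is not where $\frac{1+t}{2}$ comes from: it is the two-dimensional area of $R$ in the $z_2$-plane, namely $\frac12+\frac t2$.
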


\begin{proof}
By the remark proceeding Lemma~\ref{lem-pushing-along-vector-field}, for the proof of the proposition it suffices to find a  symplectic embedding $ \phi : B^4({\frac {1+t} 2}) \overset{\mathrm s}{\hookrightarrow } B^4(1)$ with image ${\bf B}$ satisfying ${\rm proj}_{z_2}({\bf B} \cap \Sigma) \cap \partial E^+ = \emptyset$. Denote by $D(r)$ the disk of area $r$ centered at the origin. 
Note that $\partial E^+$ divides the disc $D(1)$ into two regions, and set $R$ to be the one with area $\frac{1+t}{2}$. Next, let $S_h := R \cap D(1-h)$ (see Figure \ref{sh-def-fig}). Note that $S_h$ is the set of all points in $ R$ such that the fibers of the map ${\rm proj}_{z_2}$ in $B^4(1)$ have area at least $h$. It is not hard to check that the area of $S_h$ is at least $\frac{1-h}{2} + \frac{t}{2}\sqrt{1-h}$. Indeed,  the area of $S_h \cap \{x_2 \leq 0\}$ is ${\frac {1-h} 2}$, while the area of $S_h \cap \{x_2 \geq 0\}$ can be bounded from below by half the area of an ellipse centered at the origin with radii $r_1$ and $r_2$, where $\pi r_1^2 = 1-h$ and $\pi r_2^2 = t^2$. 

\vskip 5pt

We shall construct an area preserving map $\varphi$ from $D({\frac {1+t} {2}})$ to $R$, such that the product ${\rm Id} \times \varphi$ is the required symplectic embedding $\phi$. Note that the image ${\bf B} \subset B^4(1)$ if $\varphi (D({\frac {1+t} {2}} -h)) \subset S_h$ for all $0 \le h \le \frac{1+t}{2}$. Indeed, let $(z_1',z_2') \in B^4 ({\frac {1+t} 2})$ such that $z_1' \in D(h')$ and $z_2' \in D({\frac {1+t} 2} -h')$ for some $h'$. 
If $\varphi (z_2') \in S_{h'}$, then, as $S_{h'} \subset D(1-h')$, one as $(z_1',\varphi(z_2')) \subset B^4(1)$. 
Using Lemma 3.1.5 in~\cite{schl} (see Figure \ref{figure-varphi}), one can construct such a map $\varphi$ since $${\rm Area} \, \bigl (D({\frac {1+t} {2}} -h) \bigr ) = \frac{1+t}{2} - h \le \frac{1-h}{2} + \frac t2 \sqrt{1-h} \le {\rm Area} \, (S_h),$$ for all $0 \leq h \leq {\frac {1+t} {2} }$. 
Finally, as the $z_2$-component of $\phi$ lies in $R$, such a map automatically satisfies ${\rm proj}_{z_2}({\bf B} \cap \Sigma) \cap \partial E^+ = \emptyset$, as required.
This completes the proof of the proposition. 
\end{proof}

\begin{figure}[h]
\centering
\includegraphics[width=0.4\textwidth]{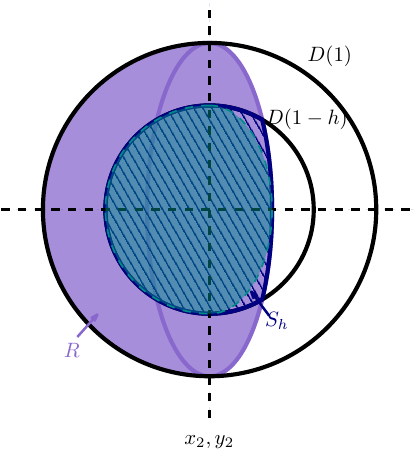}
 \caption{The domain $S_h$ in blue, and the domain $R$ in purple. The domain in green is used to bound the area of $S_h$ from below.}
 \label{sh-def-fig}
 \end{figure}

We can extend Proposition \ref{c3} to higher dimensions as follows.

\begin{proposition}\label{c3n} 
 For any $t \in (0,1)$ and $n \ge 2$
$$\underline{c}(B^{2n}(1) \setminus (P_t \times \CC^{n-2})) \ge \frac{1+t}{2}.$$
\end{proposition}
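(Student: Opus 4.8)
The plan is to reduce the $2n$-dimensional statement to the four-dimensional result of Proposition~\ref{c3} by taking a product of the four-dimensional embedding with an identity (or a volume-preserving rescaling) on the extra $\CC^{n-2}$ factors, while being careful that the product embedding still lands inside $B^{2n}(1)$ rather than merely inside $B^4(1)\times\CC^{n-2}$. Concretely, write a point of $\CC^n$ as $(z_1,z_2,w)$ with $w=(z_3,\dots,z_n)\in\CC^{n-2}$, so that $P_t\times\CC^{n-2}$ consists of those points with $(z_1,z_2)\in P_t$, and recall from the proof of Proposition~\ref{c3} that the obstructing set can be displaced off $P_t\times\CC^{n-2}$ once we find a symplectic embedding of $B^{2n}(\tfrac{1+t}{2})$ into $B^{2n}(1)$ whose image $\mathbf B$ satisfies ${\rm proj}_{z_2}(\mathbf B\cap\Sigma)\cap\partial E^+=\emptyset$, where now $\Sigma=\{y_1=0\}\subset\CC^n$. (Lemma~\ref{lem-pushing-along-vector-field} applies verbatim with $N=\Sigma\cap B^{2n}(1)$ and the vector field acting only in the $x_1$-direction.)

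The key step is the construction of the embedding. First I would take the map $\phi_4={\rm Id}\times\varphi\colon B^4(\tfrac{1+t}{2})\to B^4(1)$ from Proposition~\ref{c3}, with image $\mathbf B_4$, and recall the crucial nesting property established there: $\varphi\big(D(\tfrac{1+t}{2}-h)\big)\subset S_h\subset D(1-h)$ for all $0\le h\le\tfrac{1+t}{2}$. I then define $\phi\colon B^{2n}(\tfrac{1+t}{2})\to\CC^n$ by $\phi(z_1,z_2,w)=(z_1,\varphi(z_2),w)$. To verify that the image lies in $B^{2n}(1)$, take $(z_1,z_2,w)$ with $\pi(|z_1|^2+|z_2|^2+|w|^2)\le\tfrac{1+t}{2}$; setting $h$ so that $\pi|z_2|^2=\tfrac{1+t}{2}-h$ (equivalently $z_2\in\partial D(\tfrac{1+t}{2}-h)$ so that $\pi(|z_1|^2+|w|^2)\le h$), the nesting property gives $\pi|\varphi(z_2)|^2\le 1-h$, whence $\pi(|z_1|^2+|\varphi(z_2)|^2+|w|^2)\le 1$. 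Since $\phi$ is a product of symplectomorphisms it is symplectic, and since its $z_2$-component lies in $R$ (the region bounded by $\partial E^+$ of area $\tfrac{1+t}{2}$), we get ${\rm proj}_{z_2}(\mathbf B\cap\Sigma)\subset R$, which is disjoint from $\partial E^+$ by construction of $R$.

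The main (and essentially only) obstacle is the geometric bookkeeping in the ball-containment check: one must use the precise nesting $\varphi(D(\tfrac{1+t}{2}-h))\subset S_h$ uniformly in $h$, exactly as in Proposition~\ref{c3}, rather than the weaker statement that $\varphi$ maps $D(\tfrac{1+t}{2})$ into $D(1)$, because the extra $w$-coordinates contribute to the radius budget in $B^{2n}(1)$ and the argument only closes if shrinking $|z_2|$ buys enough room. Everything else — that $\phi$ is symplectic, that the displacement lemma applies in the presence of the spectator coordinates $w$, and that the resulting complement bound is $\underline c(B^{2n}(1)\setminus(P_t\times\CC^{n-2}))\ge\tfrac{1+t}{2}$ — then follows by the same reasoning as in the four-dimensional case.
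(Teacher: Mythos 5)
Your proof is correct, and it takes a genuinely different, more elementary route than the paper's. The paper packages the entire four-dimensional argument (embedding plus displacement) into a compactly supported Hamiltonian $H(z_1,z_2)$ with $\phi_H^1\bigl(B^4(\tfrac{1+t}{2})\bigr)\subset B^4(1)\setminus P_t$, rescales it to $H_r(z_1,z_2):=rH(z_1/\sqrt{r},z_2/\sqrt{r})$ so that $\phi_{H_r}^1\bigl(B^4(\tfrac{r(1+t)}{2})\bigr)\subset B^4(r)\setminus P_t$, and then applies the Buse--Hind suspension: the Hamiltonian $G(z)=H_{1-\xi(z)}(z_1,z_2)$ with $\xi(z)=\pi\sum_{k\ge3}|z_k|^2$ has flow preserving $\xi$ (it only rotates the higher $z_k$-circles), so it acts fiberwise by $\phi_{H_{1-\xi}}^1$ and sends $B^{2n}(\tfrac{1+t}{2})$ into $B^{2n}(1)\setminus(P_t\times\CC^{n-2})$ via the inclusion $B^4(\tfrac{1+t}{2}-\xi)\subset B^4\bigl((1-\xi)\tfrac{1+t}{2}\bigr)$. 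You instead exhibit the embedding $\mathrm{Id}\times\varphi\times\mathrm{Id}$ explicitly, verify the $2n$-dimensional ball containment directly from the nesting $\varphi\bigl(D(\tfrac{1+t}{2}-h)\bigr)\subset S_h\subset D(1-h)$, and then perform the $x_1$-displacement off $P_t\times\CC^{n-2}$ in dimension $2n$ rather than $4$. Both approaches reopen the proof of Proposition~\ref{c3}, only at different points: yours reuses the precise nesting and avoids the scaling and suspension machinery entirely, while the paper's is more modular in that it only invokes the \emph{existence} of some Hamiltonian with the four-dimensional property. The one step worth spelling out in your version is that the displacement remark preceding Lemma~\ref{lem-pushing-along-vector-field} really does carry over to $n>2$: for every $(z_2,w)$ with $\pi(x_2^2/t^2+y_2^2+|w|^2)<1$, the point $\tfrac{s}{t}x_2$ at which $P_t\times\CC^{n-2}$ meets the $x_1$-fiber of $\Sigma\cap B^{2n}(1)$ lies strictly in the interior of that fiber, so pushing the image in the $+x_1$-direction still clears $P_t\times\CC^{n-2}$ provided the $z_2$-projection of the image avoids $\partial E^+$, exactly as in dimension four.
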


\begin{proof}
We would like to construct a Hamiltonian function $G(z_1, \dots, z_n, t)$ whose corresponding time-$1$ flow maps the ball $B^{2n}({\frac {1+t} 2})$ into $B^{2n}(1) \setminus (P_t \times \CC^{n-2}))$.
From the proof of Proposition \ref{c3} it follows that there exists a Hamiltonian function $H(z_1, z_2, t)$ whose time-$1$ flow maps the ball $B^4({\frac {1+t} 2})$ into $B^4(1) \setminus P_t$. It follows that for every $r>0$ the time-1 map of the Hamiltonian function  $H_r(z_1, z_2, t) := rH(z_1 / \sqrt{r}, z_2 / \sqrt{r}, t)$ maps $B^4({\frac {r(1+t)} 2})$ into $B^4(r) \setminus P_t$.
Next, for a point $z=(z_1,\ldots,z_n) \in B^{2n}({\frac {1+t} 2})$, set $\xi(z) = \pi \sum_{k=3}^n |z_k|^2$. Note that $(z_1,z_2) \in B^4({\frac {1+t} 2} - \xi(z)) \subset B^4 \left ((1-\xi(z)) {\frac {1+t} 2} \right)$. Hence, since  $\phi_{H_{1-\xi(z)}}(z_1,z_2) \in B^4(1-\xi(z)) \setminus P_t$, the time-$1$ map of the Hamiltonian function $$G(z_1, \dots, z_n, t) := H_{1 - \xi(z)}(z_1, z_2, t),$$  satisfies 
$$ \phi_G (z_1,\ldots z_n) \in B^{2n}(1) \setminus (P_t \times \CC^{n-2})) $$ as required (cf. \cite{bh}, Section 2.1). This completes the proof of the proposition. 
\end{proof}

\begin{figure}[h]
\centering
\includegraphics[width=0.6\textwidth]{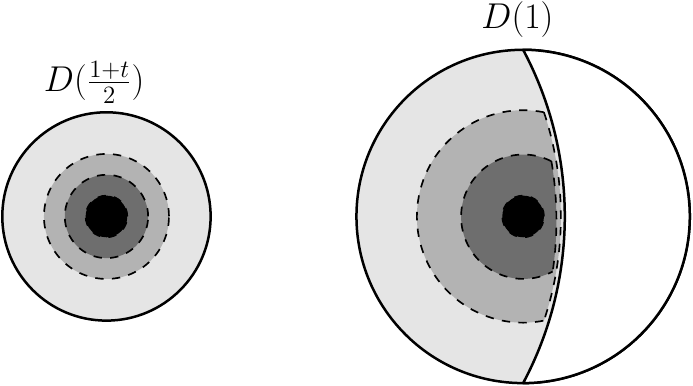}
 \caption{The map $\varphi$ from $D(\frac{1+t}{2})$ into $R$.}
 \label{figure-varphi}
 \end{figure}

Next we turn to establish the upper bound in Theorem~\ref{thm-complement-plane}. For this, let $L$ be the Lagrangian plane spanned by $(0,i)$ and $(1,0)$. 
Note that both $L$ and $P_t$ lie in the hyperplane $\Sigma := \{ y_1=0 \}$, and one has ${\rm proj}_{z_2}(L) = \{x_2=0\}$.
The main ingredient we need is the following:

\begin{theorem} \label{thm-embedding-to-ball-with-Lagrangian-removed} 
Let ${\bf K} \subset B^4(1) \setminus P_t$ be a compact subset. 
Then there exists a symplectomorphism  $\phi$ of $\CC^2$ with $\phi({\bf K}) \subset B^4(1+t) \setminus L$.
\end{theorem}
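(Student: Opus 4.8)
The plan is to build $\phi$ as a composition of a few explicit symplectomorphisms of $\CC^2$, using crucially (i) that ${\bf K}$ is compact, so it avoids an entire tube around $P_t\cap B^4(1)$ and keeps a definite distance from $\partial B^4(1)$, and (ii) that $P_t$ and $L$ both sit in the hyperplane $\Sigma=\{y_1=0\}$. First, a routine reduction: fix $\eps>0$ with ${\bf K}\subset B^4(1-\eps)$ and ${\bf K}$ disjoint from the slab $N_\eps=\{|y_1|\le\eps,\ |tx_1-sx_2|\le\eps\}$, a tubular neighbourhood of $P_t$; it then suffices to map $B^4(1-\eps)\setminus N_\eps$ symplectically into $B^4(1+t)\setminus L$, and letting $\eps\to 0$ (with conformality to rescale) yields the general statement. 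Note the argument is genuinely four-dimensional, exactly as for Theorem~\ref{thm-complement-plane}.

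Next, I would clear ${\bf K}$ off one face of $P_t$. Using Lemma~\ref{lem-pushing-along-vector-field} with $N=\Sigma\cap B^4(1)$ and $X_N$ a vector field along the characteristic direction $\partial_{x_1}$, one obtains a Hamiltonian diffeomorphism $\psi_1$ of $\CC^2$ that pushes ${\bf K}\cap\Sigma$ entirely to one prescribed side of $P_t$ inside $\Sigma$ — this is precisely the displacement mechanism described in the paragraph preceding Lemma~\ref{lem-pushing-along-vector-field}, now used to sweep ${\bf K}$ away from $P_t$ rather than to slip a ball past it. I would arrange, in particular, that the $z_2$-shadow of $\psi_1({\bf K})\cap\Sigma$ is disjoint from the arc $\partial E^+$ and that $\psi_1({\bf K})$ lies where $P_t$ meets the ${\rm proj}_{z_2}$-fibres of $B^4(1)$ only near their boundary. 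A linear symplectomorphism $\psi_2$ can then be applied to put $P_t$ into a convenient normal form; since $P_t$ is a symplectic plane it cannot be carried onto the Lagrangian $L$, but one may at least make its $z_2$-shadow equal to the segment ${\rm proj}_{z_2}(L)=\{x_2=0\}$ (e.g.\ via $(x_1,y_1,x_2,y_2)\mapsto(x_1,\,y_1+\tfrac ts y_2,\,x_2-\tfrac ts x_1,\,y_2)$), so that afterwards $P_t$ and $L$ differ only transversally to the characteristic leaves over that segment.

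The heart of the proof is a final ``opening'' symplectomorphism $\psi_3$ converting the complement of the (normalized) symplectic plane in $B^4(1)$ into the complement of $L$ in $B^4(1+t)$. Here the four-dimensionality is essential: a short fibre-wise count shows that no symplectomorphism preserving $\Sigma$ can do this, so $\psi_3$ must genuinely lift part of the complement off $\Sigma$ — in the spirit of symplectic folding — and it is exactly this opening, whose width is controlled by the area $t$ of the elliptical shadow $E$, that forces the enlargement from radius $1$ to radius $1+t$. The radius bookkeeping is the same one already used in the proof of Proposition~\ref{c3}: the region one must fit is controlled by ${\rm Area}(S_h)\ge\frac{1-h}{2}+\frac t2\sqrt{1-h}\ge\frac{1+t}{2}-h$, now transplanted to the complement of $L$ inside $B^4(1+t)$ rather than the complement of $P_t$ inside $B^4(1)$. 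I would realize $\psi_3$ as the time-one flow of a Hamiltonian built from cutoff functions adapted to this picture, so that it extends to a symplectomorphism of all of $\CC^2$; then with $\phi=\psi_3\circ\psi_2\circ\psi_1$ one checks that $\phi({\bf K})$ avoids $L$ (because after $\psi_1$ and $\psi_2$ the set sits strictly on one side of the normalized $P_t$, while $\psi_3$ moves $L$ to the other side) and that $\phi({\bf K})\subset B^4(1+t)$ by the area estimate.

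The step I expect to be the main obstacle is the construction of $\psi_3$ together with the sharp bound $1+t$. This constant is optimal — it is exactly what lets Proposition~\ref{prop-cylindrical-upper-bnd} conclude $\overline{c}(B^4(1)\setminus P_t)\le\frac{1+t}{2}$, matching the lower bound of Propositions~\ref{c3}--\ref{c3n} — so the opening must be essentially area-lossless, leaving no room for crude estimates, and at the same time it must be realized by a genuine global symplectomorphism of $\CC^2$, which is why the compactness reduction and the cutoffs are unavoidable. Everything else — Lemma~\ref{lem-pushing-along-vector-field}, the unitary normal form reducing $E_t$ to $P_t\times\CC^{n-2}$, and the two-dimensional area inequality — is already available from earlier in the section.
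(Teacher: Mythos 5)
Your proposal correctly identifies the four-dimensionality of the problem, the role of Lemma~\ref{lem-pushing-along-vector-field}, and the fact that compactness of ${\bf K}$ buys a uniform gap from both $P_t$ and $\partial B^4(1)$. However, there are two substantive problems, and the second is fatal.

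First, a smaller but genuine error: you propose to use $\psi_1$ to ``push ${\bf K}\cap\Sigma$ entirely to one prescribed side of $P_t$ inside $\Sigma$.'' That is in general impossible: inside $\Sigma\cong\R^3$, $P_t$ is a $2$-plane, and a compact set disjoint from it may have components strictly on both sides which cannot be brought to a common side without crossing $P_t$. The paper's version of this step (Lemma~\ref{one}) does the opposite of what you suggest: it splits ${\bf K}\cap\Sigma$ into $A$ and $B$ according to which side of $P_t$ each point lies on, and then pushes $A$ in the $+x_1$-direction and $B$ in the $-x_1$-direction, \emph{away} from each other. This asymmetric handling is essential, because the two shadows ${\rm proj}_{z_2}(A)\subset\{x_2\le0\}\cup E^+$ and ${\rm proj}_{z_2}(B)\subset\{x_2\ge0\}\cup E^-$ need to be displaced from $\{x_2=0\}$ in \emph{opposite} ways. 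Also, the condition that the $z_2$-shadow avoid $\partial E^+$ belongs to the Gromov-width lower bound (Proposition~\ref{c3}), not to this theorem; you have imported the wrong constraint.

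Second, and decisively, the step you yourself flag as the main obstacle — the ``opening'' symplectomorphism $\psi_3$ — is left entirely as a black box, with an appeal to symplectic folding and to the area estimate ${\rm Area}(S_h)\ge\frac{1-h}{2}+\frac t2\sqrt{1-h}$. That estimate is again from the Gromov-width side of the argument and plays no role here, and there is no indication of how a folding-type map would produce precisely $B^4(1+t)\setminus L$. The paper's route is quite different and more concrete: it does not attempt to build a global map taking the complement of $P_t$ in the small ball onto the complement of $L$ in the large one. Instead, it (a) repositions ${\bf K}$ — Lemma~\ref{two} increases $\pi|z_1|^2$ by a $\theta$-dependent amount $\le t$, which is the only place the enlargement from $B^4(1)$ to $B^4(1+t)$ occurs — so that $\psi_2({\bf K})\subset\bigl(\{\pi|z_1|^2>t\}\cup\{|x_1|>\delta,\ |y_1|<\delta\}\bigr)\cap B^4(1+t)$; and then (b) \emph{displaces} the repositioned ${\bf K}$ from $L$ by a Hamiltonian flow of $B^4(1+t)$. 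The displacement relies on the area fact that $\{x_2\le0\}\cup E^+$ and $\{x_2\ge0\}\cup E^-$ each have area exactly $(1+t)/2$, so each can be displaced from $\{x_2=0\}$ within $D(1+t)$ (Hamiltonians $a$ and $b$), glued via $F=\chi(x_1)a+\chi(-x_1)b$; the remaining work (Lemmas~\ref{moveAB}, \ref{Faction}, Corollaries~\ref{Gaction}--\ref{extend}) checks that the off-$\Sigma$ part $J={\bf K}\cap\Sigma^c$ never meets $L$, and that $H$ can be chosen constant on the Hopf characteristics of $\partial B^4(1+t)$ so the flow preserves the ball. Your outline contains none of this mechanism, so the core of the theorem remains unproved in your sketch.
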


\begin{proof}

Set  $I := {\bf K} \cap \Sigma = A \sqcup B$ with $A$, $B$ defined as follows:

$$A := \{ p \in I \, | \, p + (\lambda,0) \in P_t \ \mathrm{for \, some} \, \lambda<0 \},$$
$$B := \{ p \in I \, | \, p + (\lambda,0) \in P_t \ \mathrm{for \, some} \, \lambda>0 \}.$$
Since $P_t$ is a graph over the $z_2$-plane, $I$ is the disjoint union of $A$ and $B$.
Further, since $s,t$ are positive we have that 
$\{ (z_1, z_2) \in P_t \, | \, z_2 \in E^+\} \subset \{x_1 \ge 0\}$ and $ \{ (z_1, z_2) \in P_t \, |  \, z_2 \in E^-\} \subset \{x_1 \le 0\}.$
This implies in particular that 
${\rm proj}_{z_2}(A) \subset \{x_2 \le 0\} \cup E^+$ and  ${\rm proj}_{z_2}(B) \subset \{x_2 \ge 0\} \cup E^-$. 
Indeed, for $p \in A$, if $x_2(p) \geq 0$, then $x_1(p) \geq 0$, and then $p + (\lambda,0) \in B^4(1) \cap P_t$. Hence ${\rm proj}_{z_2}(p) = {\rm proj}_{z_2}(p + (\lambda,0)) \in E^{+}$. A similar argument holds for  points in $B$.

\vskip 5pt Our proof has two steps. In Step 1 we apply a symplectic diffeomorphism to ${\bf K}$, with support in $B^4(1+t)$, moving first
the  subsets $A$ and $B$ away from $x_1=0$, and then moving ${\bf K}$ sufficiently away from the $z_2$-axis.
In Step 2 we describe a Hamiltonian diffeomorphism of $B^4(1+t)$ displacing the re-positioned ${\bf K}$ obtained in Step 1 from $L$ as required.

\vskip 5pt {\it Step 1.} The repositioning of ${\bf K}$ is achieved via the following two lemmas.

\begin{lemma}\label{one}
For every $\delta >0$ sufficiently small,
there exists a Hamiltonian diffeomorphism $\psi_1$ with compact support in $B^4(1) \setminus P_t$ 
such that the sets $A$ and $B$ defined for $\psi_1({\bf K})$ satisfy $A \subset \{x_1 > \delta\}$ and $B \subset \{x_1 < -\delta\}$.
\end{lemma}

\begin{proof}
We can find such a diffeomorphism $\psi_1$ which preserves $\Sigma$ by applying 
Lemma \ref{lem-pushing-along-vector-field},
since moving points of $A$ in the positive $x_1$-direction, and points of $B$ in the negative $x_1$-direction does not introduce intersections with $P_t$.
\end{proof}

To simplify notations, in what follows we denote   the image $\psi_1({\bf K})$ provided by Lemma~\ref{one} also by ${\bf K}$.

\vskip 5pt 

\begin{lemma}\label{two} 
For every $\delta >0$ sufficiently small there is a symplectic diffeomorphism 
{\color{black} $\psi_2 : {\mathbb R}^4 \rightarrow {\mathbb R}^4$ which is the identity on $\Sigma$, and satisfies }
 $$\psi_2({\bf K}) \subset \bigl ( \{ \pi|z_1|^2 > t \} \cup  \{|x_1| > \delta \ {\rm and} \  |y_1| < \delta \} \bigr ) \cap B^4(1+t).$$
\end{lemma}

\begin{proof} We use symplectic polar coordinates on the $z_1$-plane, with $R = \pi |z_1|^2$ and $\theta \in S^1 = \R / \Z$.
Let $f: S^1 \to [0,t]$ satisfy $f(\theta)=t$ when both $| \theta |> \delta$ and $| \theta - 1/2| > \delta$, and  $f=0$ when $\theta =0$ or $\theta = 1/2$. Then, consider the closed $1$-form $\eta = f(\theta) d \theta$. As ${\bf K}$ is disjoint from $\{z_1=0\}$, the form $\eta$ defines a symplectic isotopy of ${\bf K}$ increasing the $R$ coordinate by $f(\theta)$ and preserving $z_2$. As $0 \le f(\theta) < t$, the ball remains disjoint from $\{ z_1=0 \}$ but stays within $B^4(1+t)$, as required (see Figure \ref{figure-psi-2}). This completes the proof of the lemma. 
\end{proof}

\begin{figure}[h]
\centering
\includegraphics[width=0.6\textwidth]{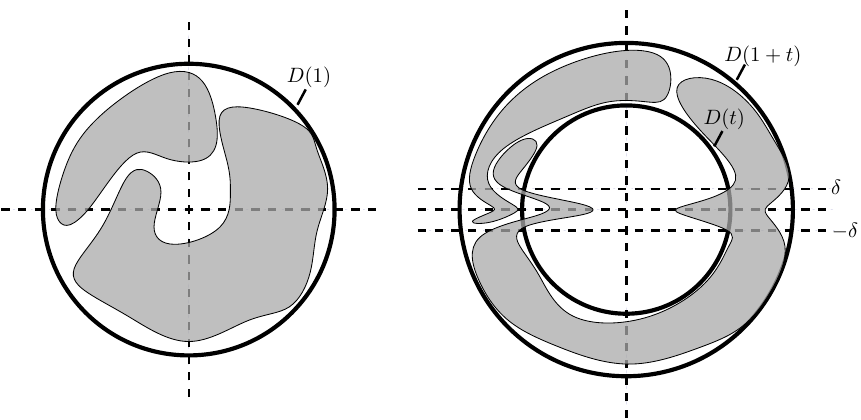}
 \caption{An illustration of the area preserving map $\psi_2$ which ``pushes" ${\bf K}$ into the set $\{ \pi|z_1|^2 > t \} \cup  \{|x_1| > \delta \ {\rm and} \  |y_1| < \delta \}$.}
 \label{figure-psi-2}
 \end{figure}

\vskip 5pt {\it  Step 2.} Displacing the repositioned ${\bf K}$ from $L$.

\vskip 5pt
To further simplify notations, using Step 1, in what follows we assume that
%
${\bf K}  \subset \bigl( \{ \pi|z_1|^2 > t \} \cup  \{|x_1| > \delta \ {\rm and} \  |y_1| < \delta \} \bigr ) \cap B^4(1+t)$ for some sufficiently small $\delta >0$. 
 In addition,  note that ${\bf K} \cap \Sigma = A \sqcup B \subset B^4(t)$, where $A \subset \{x_1 > \delta\}$ and $B \subset \{x_1 < -\delta\}$. 
Our goal is to find a Hamiltonian diffeomorphism $\phi$ of $B^4(1+t)$ which displaces ${\bf K}$ from $L$. 
We must consider both $I = {\bf K} \cap \Sigma$ and $J = {\bf K} \cap \Sigma^c$, and give sufficient conditions for a Hamiltonian function to generate a diffeomorphism displacing $I$ from $L$, while leaving $J$ disjoint from $L \subset \Sigma$. The proof will conclude by showing that Hamiltonian functions satisfying the sufficient conditions exist.

\medskip

{\it Displacing $I$ from $L$.}

\medskip

We can find two Hamiltonian functions $a(z_2)$ and $b(z_2)= a(-z_2)$, with compact support in $D(1+t)$ whose corresponding time-1 flows, denoted by $\phi_a$ and $\phi_b$ respectively, satisfy   $\phi_a({\rm proj}_{z_2}(A)) \cap \{x_2 =0\} = \emptyset$ and  $\phi_b({\rm proj}_{z_2}(B)) \cap \{x_2 =0\} = \emptyset$. This is because ${\rm proj}_{z_2}(A)$ and ${\rm proj}_{z_2}(B)$ are compact subsets of $\{x_2 \le 0\} \cup E^+$ and $-(\{x_2 \le 0\} \cup E^+)$, respectively, and both sets have area $(1+t)/2$. In fact, as $A$ and $B$ are compact, choosing $\delta$ smaller if necessary we may assume $a$ and $b$ have compact support in $D(1+t-
\delta)$. Note that one can choose $a$ such that $0 \le a \le t/2 - \delta$. In addition, we can let $a=0$ on $\{x_2=0\} \setminus D(1)$, and make sure the flow $\phi_a$ of $\{x_2=0\} \cap D(1)$ remains in $D(1)$.

\vskip 5pt

Next, let $\chi(x)$ be such that $\chi(x) =0$ if $x < 0$, $\chi(x) =1$ if $x > \delta$, and $0 \le \chi'(x) < 1/ \delta$.  
We consider the  Hamiltonian function $F:{\mathbb C}^2  \to \RR$ defined by 
\begin{equation} \label{def-ham-func-before-change}
F(x_1,y_1,x_2,y_2) = \chi(x_1)a(x_2,y_2) + \chi(-x_1)b(x_2,y_2).
\end{equation}
Note that the 
Hamiltonian flow 
satisfies $\phi_F(I) \cap L = \phi_F (A \sqcup B) \cap L = \emptyset$.
More generally we have the following.
\begin{lemma}\label{moveAB}
Suppose $H: {\mathbb C}^2 \to \RR$ satisfies
$H=F$ in $\Sigma$ and in a neighborhood of $\Sigma \cap \{|x_1|< \delta \}$.
Then the time-1 flow satisfies $\phi_H(A \sqcup B) \cap L = \emptyset$.
\end{lemma}
\begin{proof}
Suppose $p = (z_1, z_2) \in A$ (the argument for points in $B$ is identical), then $x_1 > \delta$. The component of the Hamiltonian vector field $X_H$ in the direction $\frac{ \partial}{\partial x_1}$ is given by $\frac{ \partial H}{ \partial y_1}$, which vanishes when $x_1 = \delta$ as $H$ agrees with $F$. Also, as $p \in \Sigma$, when $x_1 > \delta$ we have $\frac{ \partial H}{ \partial x_1} =\frac{ \partial F}{ \partial x_1} = 0$. Therefore the flow $\phi_H$ of $p$ remains in $\Sigma$, and as the $z_2$-component of $X_F$ on $\Sigma \cap \{x_1 > \delta \}$ is independent of $x_1$, we see that $\phi_H$ also displaces $A$ from $L = \Sigma \cap \{x_2 =0\}$. 
\end{proof}

\medskip

{\it Controlling the flow of $J$.}

\medskip

Note that showing that $J = {\bf K} \cap \Sigma^c$ remains disjoint from $L$ under a Hamiltonian flow is equivalent to showing that the inverse flow applied to $L$ remains disjoint from $J$. Consider the Hamiltonian flow $\phi^s_{-F}$ generated by $-F$, so $\phi^1_{-F} = (\phi_F)^{-1}$. This flow has the following property: 

\begin{lemma}\label{Faction} Let $0 \le s \le 1$. With the above notations one has $$\phi^s_{-F}(L) \subset \Sigma \cup \{- \delta < x_1 < \delta, \, 0 \le {\rm sign}(x_1) y_1 < \frac{t}{2 \delta} -1, \, z_2 \in D(1) \}.$$
\end{lemma}

\begin{proof} Suppose $x_1 \ge 0$. Then
we have $$X_{-F} = \chi'(x_1)a(z_2) \frac{\partial}{\partial y_1}  + \chi(x_1) X_a.$$
For a point $(x_1,0,0,y_2) \in L$, set $p:=(x_1',y_1',x_2',y_2')$ to be its image under the flow of $X_{-F}$. Note that if $x_1 > \delta$, then $\chi'=0$ and in particular $p \in \Sigma$. Assume $0<x_1<\delta$. Note that if the projection ${\rm proj}_{z_2}(x_1,0,0,y_2) = (0,y_2) \notin D(1)$, then by our assumption on the Hamlitonian function $a$ one has $a(0,y_2)=0$, and thus $y_1'=0$, i.e., $p \in \Sigma$.  If $(0,y_2) \in D(1)$, then, by the assumptions  on $\chi$ and $a$,  one has that $0 \leq y_1' \leq {\frac {t} {2\delta}} -1$, and that $(x_2',y_2') \in D(1)$. 
This completes the proof of the lemma for $x_1 \geq 0$. A similar argument works for the case $x_1 <0$. 
\end{proof}

Let $\xi$ be a symplectomorphism of the $z_1$-plane mapping the region
$$\{ - \delta < x_1 < \delta, \, 0 \le {\rm sign}(x_1) y_1 < \frac{t}{2 \delta} -1 \}$$
into the disk $D(t) \setminus \{|x_1| > \delta \ {\rm and} \  |y_1| < \delta \}$. 
Furthermore, suppose that $\xi$ is the identity near $\{y_1=0\}$. 
%
Define $G:{\mathbb C}^2  \to \RR$ by $G = F \circ (\xi \times {\rm Id})^{-1}$. The following is a corollary of Lemma \ref{Faction}.

\begin{corollary} \label{Gaction} Let $0 \le s \le 1$. Then $$\phi^s_{-G}(L) \subset \Sigma \cup ( (\{ \pi|z_1|^2 < t \}\setminus \{|x_1| > \delta \ {\rm and} \  |y_1| < \delta \}) \cap B^4(1+t)).$$
\end{corollary}

\begin{proof} The proof follows immediately from Lemma~\ref{Faction} and the fact that the flow of $G$ is given by $\phi^s_G = (\xi \times {\rm Id}) \circ \phi^s_G \circ (\xi \times {\rm Id})^{-1}$.
\end{proof}

Our repositioning Lemma \ref{two} now implies that $\phi_G(J) \cap L = \emptyset$. Slightly more generally, let $U$ be a neighborhood of the union of the sets $  \phi^s_{-G}(L) $
for $0 \le s \le 1$. Then we have the following.

\begin{corollary} \label{Haction} Let $H: {\mathbb C}^2 \to \RR$ so that $H=G$ on $U$. Then $\phi_H(J) \cap L = \emptyset$.
\end{corollary}

\medskip

{\it Displacing ${\bf K}$.}

\medskip

Recall that the flow generated by a Hamiltonian function $H : \CC^2 \to \RR$ preserves the ball $B^4(1+t)$ provided it is constant on the characteristic circles in $\partial B^4(1+t)$. Summarizing our discussion above, combining Lemma \ref{moveAB}. Corollary \ref{Haction}, and the symplectomorphism from Step 1, give the following.

\begin{corollary}\label{extend} Suppose $H : \CC^2 \to \RR$ is such that
\begin{enumerate}
    \item $H$ is constant on characteristics of $\partial B^4(1+t)$,
    \item 
    $H=F$ in $\Sigma$ and in a neighborhood of $\Sigma \cap \{|x_1|< \delta \}$,
    \item $H=G$ on $ U$.
\end{enumerate}
Then, the time-1 map $\phi_H$ restricts to give a diffeomorphism of $B^4(1+t)$ which displaces ${\bf K} = I \sqcup J$ from $L$.    
\end{corollary}

It remains to show that such functions $H$ exist. We will show that the three conditions in Corollary~\ref{extend} are consistent.

\medskip

We start by showing the compatibility of the first two conditions. 
For this we need to show that for each Hopf circle on $\partial B^4(1+t)$, the restriction of $F$ to the intersection of the Hopf circle with $\Sigma$, and to a neighborhood of $\Sigma \cap \{|x_1| < \delta\}$, is constant.

\medskip

Note that there is a single characteristic, $\{ z_1 =0 \} \cap \partial B^4(1+t)$, lying entirely in $\Sigma$. As our functions $a$ and $b$ have compact support in $D(1+t)$, the function $F$ is identically $0$ on this circle.
The remaining characteristics intersect $\Sigma$ in exactly two points, say $(x_1,0,x_2,y_2)$ and $(-x_1,,0,-x_2,-y_2)$. Assume $x_1 \geq 0$ (a similar argument holds for $x_1<0$), note that
$$ F(x_1,0,x_2,y_2) = \chi(x_1)a(x_2,y_2) = \chi(-x_1) b(-x_2, -y_2) =  F(-x_1,0, -x_2,-y_2),$$
and so for each Hopf circle $F$ is constant on its intersection with $\Sigma$. Thus $F$ can be extended over $\partial B^4(1+t)$ to a function $H$ constant on the characteristics.

Now we consider characteristics intersecting the neighborhood $\{ |z_1| < \delta \}$ of $\partial B^4(1+t) \cap \Sigma \cap \{|x_1| < \delta\}$. These characteristics intersect $\Sigma$ inside the region $$\partial B^4(1+t) \cap \Sigma \cap \{|x_1| < \delta\} \subset \partial B^4(1+t) \cap \Sigma \cap \{ \pi|z_2|^2 > 1+t-\delta \}$$ where $F$ is identically $0$. Hence a function $H$ on $\partial B^4(1+t)$ which agrees with $F$ on $\Sigma$ and is constant on characteristics will be identically $0$ on $\{ |z_1| < \delta \}$. In particular $H$ agrees with $F$ on a neighborhood of $\Sigma \cap \{|x_1|< \delta \}$ and we have shown the compatibility of conditions 1 and 2.

\medskip 
 
Regarding condition 3, as $U$ is relatively compact in the ball, its only intersection with the boundary is on $\Sigma$. Also $F=G$ in a neighborhood of $\Sigma$. Thus we can define a smooth function $H$ simultaneously equal to $G$ on $U \setminus \Sigma$, equal to $F$ on $\Sigma$ and in a neighborhood of $\Sigma \cap \{|x_1|< \delta \}$, and equal to our extension of $F$ over $\partial B^4(1+t)$ which is constant on the characteristics.
In other words, one can find a smooth function $H$ as required.
This completes the proof of Theorem~\ref{thm-embedding-to-ball-with-Lagrangian-removed}.
\end{proof}

Equipped with Theorem~\ref{thm-embedding-to-ball-with-Lagrangian-removed}, we turn now to the final ingredient needed for the proof of Theorem~\ref{thm-complement-plane}. 

\begin{proposition} \label{prop-cylindrical-upper-bnd}
For any $t \in (0,1)$ and $n \ge 2$ 
  $$\overline{c}(B^{2n}(1) \setminus (P_t \times \CC^{n-2})) \le \frac{1+t}{2}.$$    \end{proposition}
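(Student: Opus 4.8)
The plan is to reduce the $2n$-dimensional cylindrical capacity bound to the four-dimensional situation handled by Theorem~\ref{thm-embedding-to-ball-with-Lagrangian-removed}, and then to estimate the cylindrical capacity of $B^4(1+t)\setminus L$. First I would observe that $B^{2n}(1)\setminus(P_t\times\CC^{n-2})$ is an increasing union (an exhaustion) of compact subsets $\mathbf K_j$, each contained in $B^4(1)\setminus P_t$ times a compact piece of $\CC^{n-2}$; since $\overline c$ is monotone and continuous under such exhaustions, it suffices to bound $\overline c$ of each such compact set. Applying Theorem~\ref{thm-embedding-to-ball-with-Lagrangian-removed} in the first two complex coordinates (and the identity on the remaining $\CC^{n-2}$ factor, which is harmless since the symplectomorphism $\phi$ there is compactly supported in $B^4(1+t)$), each $\mathbf K_j$ embeds symplectically into $(B^4(1+t)\setminus L)\times\CC^{n-2}$. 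Hence $\overline c(B^{2n}(1)\setminus(P_t\times\CC^{n-2}))\le \overline c((B^4(1+t)\setminus L)\times\CC^{n-2})$, and it remains to show the right-hand side is at most $\tfrac{1+t}{2}$.

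For the latter, the key point is that $L\subset\CC^2$ is a Lagrangian plane, so $B^4(1+t)\setminus L$ should squeeze into a cylinder of capacity $\tfrac{1+t}{2}$. The cleanest route I would take is to exploit the ``half-ball'' picture: a Lagrangian plane through the center of $B^4(1+t)$ divides it into two halves, and one can try to build a symplectic embedding of the complement of $L$ into $Z^4(\tfrac{1+t}{2})$ — equivalently, fold the ball along $L$. More concretely, one can use coordinates adapted to $L$ (say $L=\{y_1=0,\,x_2=0\}$) and a symplectic ``unfolding'' map that pushes everything off $L$ while contracting one of the symplectic areas to $\tfrac{1+t}{2}$; a standard model is the symplectic embedding of $B^{4}(2a)\setminus\{\text{Lagrangian}\}$ into $B^2(a)\times\R^2$, which is exactly the Lagrangian analogue underlying Biran's computation $\gw(\CP^2\setminus\RP^2)=\tfrac12$ together with the matching upper bound for $\overline c$ (the remark after Theorem~\ref{thm-complement-plane} already notes that Theorem 1.3 of~\cite{SSVZ} gives $\overline c(B^{2n}(1)\setminus E_0)=\tfrac12$, i.e. the $t=0$ case). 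Rescaling that statement by the factor $1+t$ gives $\overline c(B^4(1+t)\setminus L)\le\tfrac{1+t}{2}$, and the product with $\CC^{n-2}$ does not change the cylindrical capacity since one only needs to squeeze the first $\CC^2$-factor into $B^2(\tfrac{1+t}{2})\times\R^2$ and leave the rest alone — more precisely, $\overline c(W\times\CC^{n-2}) = \overline c(W)$ whenever $W\subset\CC^2$, because an embedding $W\hookrightarrow Z^4(r)$ induces $W\times\CC^{n-2}\hookrightarrow Z^4(r)\times\CC^{n-2}=Z^{2n}(r)$.

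Chaining these inequalities yields
$$\overline c\bigl(B^{2n}(1)\setminus(P_t\times\CC^{n-2})\bigr)\le \overline c\bigl(B^4(1+t)\setminus L\bigr)\le \tfrac{1+t}{2},$$
which is the claim. The main obstacle I anticipate is making the reduction to the compact exhaustion fully rigorous while keeping the symplectomorphism from Theorem~\ref{thm-embedding-to-ball-with-Lagrangian-removed} compatible across the exhausting sets — in particular, Theorem~\ref{thm-embedding-to-ball-with-Lagrangian-removed} produces, for each compact $\mathbf K$, a possibly different symplectomorphism $\phi=\phi_{\mathbf K}$, so one cannot pass to a single global map; instead one argues set by set and uses continuity of $\overline c$ under nested unions (equivalently, $\overline c$ of an open set is the sup over compact subsets). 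A secondary technical point is citing (or re-proving in a line) the Lagrangian squeezing $\overline c(B^4(a)\setminus L)\le a/2$ in a form valid for the \emph{non-compact} affine Lagrangian plane $L\subset\CC^2$ rather than the compact $\RP^2\subset\CP^2$; this is exactly the content of the $t=0$ case noted in the introduction, applied after rescaling, so no genuinely new input is needed.
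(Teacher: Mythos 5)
Your overall architecture matches the paper's: reduce to four dimensions, send a compact piece of $B^4(1)\setminus P_t$ into $B^4(1+t)\setminus L$ via Theorem~\ref{thm-embedding-to-ball-with-Lagrangian-removed}, squeeze the Lagrangian complement into $Z^4(\tfrac{1+t}{2})$ using Theorem 1.3 of \cite{SSVZ}, and handle the $\CC^{n-2}$ factor by taking a product with the identity. The last two steps are fine as you describe them.

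The genuine gap is the step you yourself flag as ``the main obstacle'' and then resolve by assertion: you invoke ``continuity of $\overline c$ under nested unions,'' i.e.\ that $\overline c$ of an open set is the supremum of $\overline c$ over its compact subsets. For the cylindrical capacity this inner-regularity is not a formal property and cannot be assumed: $\overline c(M)$ requires a \emph{single} symplectic embedding of all of $M$ into $Z^{2n}(r)$, and knowing that each $\mathbf K_j$ embeds into $Z^{2n}(r)$ via some $\phi_j$ gives no way to produce one, since the $\phi_j$ produced by Theorem~\ref{thm-embedding-to-ball-with-Lagrangian-removed} depend on $\mathbf K_j$ and need not cohere or converge. (For the Gromov width the sup-over-compacta reduction is free; for the target-embedding capacity $\overline c$ it is exactly the hard point.) The paper closes this gap with a concrete geometric device rather than a general principle: writing $\CC^2=P_t\oplus Q_t$ with $Q_t$ the symplectic complement of $P_t$, one takes a $C^0$-small symplectic embedding $\zeta: Q_t\setminus\{0\}\hookrightarrow Q_t\setminus B_{\delta'}$ pushing $Q_t\setminus\{0\}$ off a neighborhood of the origin, so that $\Xi=\mathrm{Id}\times\zeta$ maps \emph{all} of $B^4(1)\setminus P_t$ onto a compact subset of $B^4(1+\delta)\setminus P_t$; the compact-set theorem then applies once, to this single compact set, and yields one global embedding into $Z^4(\tfrac{1+t}{2}+\delta)$. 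Your proof needs this (or an equivalent) step; without it the passage from compact subsets to the full set is unjustified.
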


\begin{proof}
We need to produce a symplectic embedding $$B^{2n}(1) \setminus (P_t \times \CC^{n-2}) \xhookrightarrow{\mathrm s} Z^{2n}(\frac{1+t}{2} + \delta)$$ for all $\delta>0$.
Given a compact ${\bf K} \subset B^4(1) \setminus P_t$, Theorem \ref{thm-embedding-to-ball-with-Lagrangian-removed} gives a symplectic embedding ${\bf K} \xhookrightarrow{\mathrm s} B^4(1+t) \setminus L$. Theorem 1.3 from \cite{SSVZ} says that there exists a symplectic embedding $B^4(1+t) \setminus L\xhookrightarrow{\mathrm s} Z^4(\frac{1+t}{2})$, and so
composing gives an embedding ${\bf K}\xhookrightarrow{\mathrm s}  Z^4(\frac{1+t}{2})$.
Next we observe that $B^4(1) \setminus P_t$ embeds into a compact subset of $B^4(1 + \delta) \setminus P_t$ for all $\delta>0$. Indeed, a suitable embedding is the restriction of a symplectic embedding $\Xi$ of $\CC^2$ into itself defined as follows. Write $\CC^2 = P_t \oplus Q_t$, where $Q_t$ is the symplectic complement of $P_t$. Let $\zeta$ be a $C^0$-small symplectic embedding $Q_t \setminus \{0\} \xhookrightarrow{\mathrm s}  Q_t \setminus B_{\delta'}$, where $B_{\delta'}$ is small neighborhood of the origin. Thus, the required embedding is $$\Xi = {\rm Id} \times \zeta :P_t \oplus Q_t \to P_t \oplus Q_t.$$
Compose the above embedding ${\bf K}\xhookrightarrow{\mathrm s}  Z^4(\frac{1+t}{2})$ with $\Xi$, we obtain an embedding
$$\phi: B^4(1) \setminus P_t \xhookrightarrow{\mathrm s} Z^4(\frac{1+t}{2} + \delta).$$
To conclude, we observe that
$$\phi \times {\rm Id}:(z_1, \dots, z_n) \mapsto (\phi(z_1,z_2), z_3, \dots, z_n)$$ satisfies $$\phi \times {\rm Id}((B^{2n}(1) \setminus (P_t \times \CC^{n-2}))) \subset \phi(B^4(1) \setminus P_t) \times \CC^{n-2} \subset Z^{2n}(\frac{1+t}{2} + \delta).$$ This completes the proof of the proposition.
\end{proof}

\begin{proof}[{\bf Proof of Theorem~\ref{thm-complement-plane}.}]
The proof follows immediately  by  combining Proposition~\ref{c3n}, Proposition~\ref{prop-cylindrical-upper-bnd}, and the unitary equivalence between the codimension-two linear space  $E_t$ and $P_t \times {\mathbb C}^{n-2}$. 
\end{proof}

\section{The Complement of Parallel Subspaces}
In this section we prove Theorem~\ref{Thm-union-of-planes} and Theorem~\ref{thm-GW-capacity-Gromov-width}.
As before, assume that $n>1$ and equip ${\mathbb C}^n \simeq {\mathbb R}^{2n}$ with coordinates $(z_1,\ldots,z_n) = (x_1,y_1,\ldots,x_n,y_n)$, and with the standard symplectic form $\omega = dx \wedge dy$. 

\medskip


The proof of Theorem~\ref{Thm-union-of-planes} is broken into two parts: obtaining an upper bound for the cylindrical capacity, and a lower bound for the Hofer-Zehnder capacity. For the upper bound 
we  need the following observation.
Let $K \subset {\mathbb R}^{2n} \simeq {\mathbb C}^n$ be a convex body, and denote by $\ehzcap(K)$ the Ekeland-Hofer-Zehder capacity associated with $K$, i.e., the minimal action among the closed characteristics on the boundary $\partial K$ (see, e.g., Section 1.5 in \cite{HZ}). 
Moreover, for $L>0$, let $A^L : {\mathbb C}^n \rightarrow {\mathbb C}^n$ be the linear map that takes $z_n$ to $L z_n$, and leaves $z_i$ fixed for $1\leq i \leq n-1$.

\begin{proposition}
	\label{upper_bound}
	For convex $K \subset \R^{2n} \simeq {\mathbb C}^n$ such that $K = -K$ one has
	$$\lim_{L \to \infty} \ehzcap(A^L K) \leq \ehzcap(K \cap \{z_n = 0\}).$$
\end{proposition}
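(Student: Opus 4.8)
The plan is to argue that stretching $K$ in the $z_n$-direction by a factor $L\to\infty$ makes the closed characteristics on $\partial(A^L K)$ converge, in the limit, to characteristics that are essentially confined to the slice $\{z_n = 0\}$, so that the asymptotic value of $\ehzcap(A^L K)$ is controlled by $\ehzcap(K \cap \{z_n = 0\})$. The natural way to make this precise is to take, for each $L$, a closed characteristic $\gamma_L$ on $\partial(A^L K)$ realizing the capacity $\ehzcap(A^L K)$, and to study the behavior of the pulled-back curves $\eta_L := (A^L)^{-1}\gamma_L$, which lie on $\partial K$. Writing a point of $\C^n$ as $(w, z_n)$ with $w = (z_1,\dots,z_{n-1})$, the action of $\gamma_L$ splits as $\int \tfrac12(\text{contribution from }w) + L^2 \int \tfrac12(\text{contribution from the }z_n\text{-component of }\eta_L)$. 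Since $\ehzcap(A^L K)$ stays bounded (it is at most $\ehzcap(A^1 K \cap \{z_n=0\})\cdot(\text{something})$ by an easy comparison, or simply because $A^L K$ contains a fixed ball), the $z_n$-part of the action of $\eta_L$ must be $O(1/L^2)$, which forces the $z_n$-component of $\eta_L$ to degenerate.

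More concretely, I would proceed as follows. First, record the standard reformulation: for a centrally symmetric convex body $C$, $\ehzcap(C)$ equals $\min \int \lambda$ over closed characteristics of $\partial C$, equivalently (by the Clarke/Ekeland dual action principle) a quantity expressible through the gauge function $\|\cdot\|_C$; centrally symmetry lets us use the symmetric version of the variational problem. Second, fix minimizers $\gamma_L$ and normalize their periods; pass to the curves $\eta_L = (A^L)^{-1}\gamma_L \subset \partial K$. Third, use the action bound to show the $z_n$-component $\zeta_L$ of $\eta_L$ satisfies $\int |\dot\zeta_L|$-type bounds that vanish as $L\to\infty$; after passing to a subsequence (using a compactness/Arzelà–Ascoli argument, having first arranged a uniform Lipschitz bound via a reparametrization by a multiple of arclength with respect to a fixed Euclidean norm), $\eta_L$ converges to a loop $\eta_\infty \subset \partial K$ with $z_n$-component constant. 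By central symmetry and a convexity argument, the relevant limiting loop can be taken inside the slice $\{z_n = 0\}$, so $\eta_\infty \subset \partial(K\cap\{z_n=0\})$ (as a subset of $\{z_n=0\}$), and one checks it is a closed characteristic there, or at least a competitor whose action bounds $\ehzcap(K\cap\{z_n=0\})$ from above. Fourth, pass to the limit in the action identity: $\lim_L \ehzcap(A^L K) = \lim_L \mathcal A(\gamma_L) \le \mathcal A(\eta_\infty) \le \ehzcap(K \cap \{z_n = 0\})$, where the middle inequality uses that dropping the nonnegative $L^2 \int(\text{$z_n$-part})$ term only decreases the action. (If a direct "limit loop is a characteristic" statement is awkward, one can instead run this as a purely variational comparison: a closed characteristic in the slice, viewed as a loop in $\C^n$ with $z_n\equiv 0$, lies in $\partial K$ iff the corresponding point lies on $\partial K \cap \{z_n=0\}$, and plugging such a loop — rescaled by $A^L$, which does nothing to it since $z_n\equiv0$ — into the boundary of $A^L K$ shows it is a closed characteristic of $\partial(A^L K)$ as well, giving $\ehzcap(A^L K) \le \ehzcap(K\cap\{z_n=0\})$ \emph{for every} $L$, which is even stronger; the subtlety is whether $K\cap\{z_n=0\}$, sitting inside the hyperplane, has characteristics that remain characteristics of the full body — this is where a careful look at the outward normals of $K$ along the slice is needed.)

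The main obstacle I expect is precisely this last point: a closed characteristic of the lower-dimensional body $K \cap \{z_n = 0\}$ need not, a priori, be a closed characteristic of $\partial(A^L K)$, because the outward conormal of $\partial(A^L K)$ along the slice generally has a nonzero $z_n$-component, so the Hamiltonian flow of $\|\cdot\|_{A^L K}$ need not preserve $\{z_n=0\}$. The resolution should be the stretching itself: as $L\to\infty$, the body $A^L K$ becomes "very long" in the $z_n$-direction, its boundary becomes nearly flat in that direction near the central slice, the $z_n$-component of the conormal becomes negligible relative to the rest, and the characteristic dynamics near the slice converge to the intrinsic dynamics of the slice. Turning this heuristic into a clean estimate — quantifying how close the near-slice characteristics of $A^LK$ are to those of $K\cap\{z_n=0\}$, and ensuring the action converges rather than just the dynamics — is the technical heart of the argument. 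A convenient device is to work on the fixed body $K$ via $\eta_L$ and track the splitting of the action functional, so that all $L$-dependence is isolated in the single explicit coefficient $L^2$ multiplying a manifestly nonnegative term; then the inequality direction we want ($\le$) comes essentially for free from discarding that term, and the only real work is the compactness needed to produce the limit loop and to verify it is an admissible competitor for $\ehzcap(K\cap\{z_n=0\})$.
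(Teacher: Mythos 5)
The core of your plan is backwards. If you take $\gamma_L$ to be the action\emph{-minimizing} closed characteristic of $\partial(A^L K)$ and pass to a limit of the pulled-back loops $\eta_L = (A^L)^{-1}\gamma_L \subset \partial K$, the limit $\eta_\infty$ (if it exists) sits in a slice $\{z_n = \mathrm{const}\}$ of $\partial K$ and, when the argument works at all, is a closed characteristic of that slice. As such it is a \emph{competitor} for the slice's capacity, so it yields $\mathcal{A}(\eta_\infty)\geq \ehzcap(K\cap\{z_n=0\})$, not $\leq$. Likewise, writing $\mathcal{A}(\gamma_L) = \mathcal{A}_w(\eta_L)+L^2\mathcal{A}_{z_n}(\eta_L)$, discarding the (nonnegative) $L^2$ term gives $\mathcal{A}(\gamma_L)\geq \mathcal{A}_w(\eta_L)$, a lower bound for $\mathcal{A}(\gamma_L)$, not an upper bound. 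So in your chain $\lim_L\mathcal{A}(\gamma_L)\leq\mathcal{A}(\eta_\infty)\leq\ehzcap(K\cap\{z_n=0\})$ both inequalities are reversed, and what this line of argument actually proves is the complementary bound $\lim_L\ehzcap(A^LK)\geq\ehzcap(K\cap\{z_n=0\})$. To establish the stated upper bound you must go the other way: produce, from the minimizer of the slice problem, a competitor for the $A^LK$ problem.

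Your ``purely variational comparison'' is the correct direction, and you put your finger on exactly the right obstruction: a closed characteristic $\gamma_1$ of $\partial(K\cap\{z_n=0\})$ is not a closed characteristic of $\partial(A^LK)$, because the outward normal to $A^LK$ along the slice generally has a nonvanishing $z_n$-component, so the characteristic flow leaves the slice. What you are missing is the tool that makes the comparison go through despite this: Clarke's dual action principle. In the dual formulation
\[
\ehzcap(T)=\min_{\eta\in\mathcal{E}_n,\;\mathcal{A}(\eta)=1}\;\frac{\pi}{2}\int_0^{2\pi}h_T^2(\dot\eta(t))\,dt,
\]
the test object $\eta$ need not be a closed characteristic of $\partial T$; it only has to be a mean-zero loop of unit symplectic action. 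This ``softness'' is what the paper exploits: starting from the Clarke minimizer $\eta_1$ for $K\cap\{z_n=0\}$ (with associated characteristic $\gamma_1$), one rotates each $\dot\eta_1(t)$ to be parallel to $n_{A^LK}(\gamma_1(t))$ and rescales so that $h_{A^LK}(\dot\eta_2(t))=h_{K\cap\{z_n=0\}}(\dot\eta_1(t))$ pointwise. Then the Clarke functional value is unchanged, and the only defect is that $\mathcal{A}(\eta_2)=\mathcal{A}(\eta_1)+O(1/L^2)$ rather than exactly $1$; normalizing kills the error as $L\to\infty$. Your heuristic that the $z_n$-perturbation of the conormal is $O(1/L)$ and contributes $O(1/L^2)$ to the action is qualitatively correct, but it must be applied to this explicitly constructed competitor for $A^LK$, not to a limit of $A^LK$'s own minimizers. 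As written, your argument contains a genuine gap that reverses the inequality.
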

Note that $K \cap \{z_n = 0\} \subset {\mathbb C}^{n-1}$, and hence its capacity is taken with respect to the standard symplectic form restricted to ${\mathbb C}^{n-1}$. 
\begin{proof}
	Assume without loss of generality that $K$ is also strictly convex and smooth.
	Recall that, by Clarke's dual action principle  (see, e.g., Section 1.5 of~\cite{HZ1}), one has that for every convex body $T \subset {\mathbb R}^{2n}$
	\begin{align}
	\label{clark_eq}
	\ehzcap(T) = \min_{\eta \in \mathcal{E}_n, \mathcal{A}(\eta) = 1} \frac{\pi}{2} \int_0^{2\pi} h_T^2(\deta(t)) dt, 
	\end{align}
	where $\mathcal{E}_n = \{\eta \in W^{1,2}(S^1,\R^{2n}) | \int_0^{2\pi} \eta(t)dt = 0 \}$, the function  $h_T$ is the support function of $T$ defined by $h_T(u) := \sup \{ \langle x,u \rangle \, | \, x \in T \}$, and $\mathcal{A}(\eta)$ is the symplectic action of $\eta$.
	Moreover, one has that for $\eta$, a minimizer of \eqref{clark_eq}, and some $\lambda \in \R^+$ and $b \in \R^{2n}$, the orbit $\gamma(t) := \lambda J \eta(t) + b$ is a closed characteristic on the boundary of $T$ with minimal action.
	Denote by $\eta_1$ a minimizer of \eqref{clark_eq} for the body $K \cap \{z_n = 0\}$, and by $\gamma_1$ the corresponding closed characteristic.
In order to bound the capacity of $A^L K$, we consider the loop $\eta_2$ defined by 
$$\deta_2(t) = \|\deta_1(t)\| 
 \displaystyle \frac{\displaystyle  h_{K \cap \{z_n = 0\}}(n_{K \cap \{z_n=0\}}(\gamma_1(t)))}{\displaystyle h_{A^L K}(n_{A^L K}(\gamma_1(t)))} n_{A^L K}(\gamma_1(t)),$$ where $n_K(\cdot)$ is the unit outer normal to $K$ at a point.
	Recall that as $K = -K$ one has that $\gamma_1$ can be assumed to be centrally symmetric (see Corollary 2.2. in~\cite{AK}), and hence $\eta_2$ is a closed loop.
	Define $\alpha_1(t)$ and $\alpha_2(t)$ so that
	$$ n_{K}(\gamma_1(t)) = \displaystyle \frac{(n_{K \cap \{z_n=0\}}(\gamma_1(t)),\alpha_1(t),\alpha_2(t))}{\sqrt{1+\alpha_1(t)^2+\alpha_2(t)^2}} .$$
	Note that since $\gamma_1 \in \{z_n=0\}$,
        \begin{align*}
	n_{A^L K}(\gamma_1(t)) &= \frac{A^{1/L}n_{K} (A^{1/L} \gamma_1(t))}{\|A^{1/L}n_{K} (A^{1/L} \gamma_1(t))\|} \\
        &= \frac{(n_{K \cap \{z_n=0\}}(\gamma_1(t)),\frac{\alpha_1(t)}{L},\frac{\alpha_2(t)}{L})}{\sqrt{1+\alpha_1(t)^2+\alpha_2(t)^2}\|A^{1/L}n_{K} (A^{1/L} \gamma_1(t))\|}  . 
        \end{align*}
        Moreover, since  for every $p \in \partial K$ one has that 
        $h_K(n_K(p)) = \langle n_K(p),p \rangle$, 
         \begin{align*}
        \frac{n_{A^L K}(\gamma_1(t))}{h_{A^L K}(n_{A^L K}(\gamma_1(t)))} 
        &= \frac{(n_{K \cap \{z_n=0\}}(\gamma_1(t)), \frac{\alpha_1(t)}{L}, \frac{\alpha_2(t)}{L})}{\langle (\gamma_1(t),0,0) , (n_{K \cap \{z_n=0\}}(\gamma_1(t)), \frac{\alpha_1(t)}{L}, \frac{\alpha_2(t)}{L}) \rangle } \\
        &= \frac{(n_{K \cap \{z_n=0\}}(\gamma_1(t)), \frac{\alpha_1(t)}{L}, \frac{\alpha_2(t)}{L})}{h_{K \cap \{z_n = 0\}}(n_{K \cap \{z_n=0\}}(\gamma_1(t)))} .
        \end{align*}
	Next, since $\deta_1(t)$ is parallel to $n_{K \cap \{z_n=0\} }(\gamma_1(t))$, one has
	$$ \deta_2(t) = (\deta_1(t),\frac{\|\deta_1\|\alpha_1(t)}{L},\frac{\|\deta_1\|\alpha_2(t)}{L}). $$
This together with the definition of the support function implies that 
	$$ h_{K \cap \{z_n=0\} }(\deta_1(t)) = \langle \deta_1(t) , \gamma_1(t) \rangle = \langle \deta_2(t) , \gamma_1(t) \rangle = h_{A^L K}(\deta_2(t)) .$$
	Note that one has $\mathcal{A}(\eta_2) = \mathcal{A}(\eta_1) + \frac{\|\deta_1\|^2}{L^2} \mathcal{A}(\alpha)$, where $\dot{\alpha}(t) = (\alpha_1(t),\alpha_2(t))$.
	As $\mathcal{A}(\alpha)$ is bounded, one gets that $\mathcal{A}(\eta_2) \to \mathcal{A}(\eta_1)=1$ as $L \to \infty$. Using~\eqref{clark_eq} and normalizing $\eta_2$ appropriately, complete the proof.
\end{proof}

\medskip

Recall that $\Sigma_\eps^t$ is a linear image 
of the set of codimension-two subspaces
\begin{align} \label{Sec3-family-of-hyperplanes}
\Sigma_\eps := \bigcup \{(z_1,z_2,\ldots,z_n) \in \C^n : z_n \in \eps \Z^{2} \},
\end{align}
with K\"ahler angle $t$, i.e., one has  $|\omega(n_1,n_2)| = t$, where $n_1,n_2$ are the unit outer normals to the subspaces in $\Sigma^t_{\eps}$.

\begin{proposition} \label{prop-upper-bound-family-hyperplanes}
 	For any $\delta > 0$ and $t \in (0,1)$, there exists $\eps>0$  such that
	$$ \overline c(B^{2n}(1) \setminus \Sigma_\eps^t) \leq t + \delta .$$  
\end{proposition}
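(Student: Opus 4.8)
The plan is to combine Proposition~\ref{upper_bound} with a limiting argument that passes from convex bodies to the non-convex domain $B^{2n}(1)\setminus \Sigma_\eps^t$. The key observation is that, for small $\eps$, the complement $B^{2n}(1)\setminus\Sigma_\eps^t$ admits a symplectic embedding into a small neighborhood of a product of the form (convex body in $\C^{n-1}$) $\times$ (thin set in the $z_n$-plane), so that its cylindrical capacity is controlled by the $(2n-2)$-dimensional data, where the K\"ahler angle $t$ enters as the relevant area. First I would reduce, via a linear symplectic change of coordinates realizing the K\"ahler angle $t$, to understanding $\Sigma_\eps^t$ as the preimage under a fixed linear map $T$ of the standard grid family $\Sigma_\eps$; the body $T^{-1}(B^{2n}(1))$ is an ellipsoid $\mathcal{E}$, still convex and centrally symmetric. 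Removing $\Sigma_\eps^t$ from $B^{2n}(1)$ corresponds to removing the grid $\{z_n\in\eps\Z^2\}$ from $\mathcal{E}$.

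Next I would make precise the claim that, as $\eps\to0$, the domain $\mathcal{E}\setminus\{z_n\in\eps\Z^2\}$ symplectically embeds into $Z^{2n}$ of radius approaching $\overline c(\mathcal{E}\cap\{z_n=0\})$. The mechanism: in each $\eps$-cell of the grid in the $z_n$-plane one can find an area-preserving map of the punctured $\eps$-square into a long thin rectangle (or more precisely push the grid-complement into the region $\{|z_n|>\text{const}\cdot(\text{something}\to 0)\}$ after rescaling), so that in the $z_n$-direction the domain is displaced arbitrarily far from the origin at scale $o(1)$; then the $z_n$-coordinate plays no role in the squeezing, and what remains is governed by $\mathcal{E}\cap\{z_n=0\}$. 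This is the step that should be organized carefully — it is essentially the ``dimensional reduction'' underlying Proposition~\ref{upper_bound}, and indeed the cleanest route is to invoke Proposition~\ref{upper_bound} directly: apply the linear map $A^L$ (stretching $z_n$ by $L$), which sends the grid spacing $\eps$ to $L\eps$; choosing $L=L(\eps)\to\infty$ with $L\eps\to\infty$ makes the grid $\{z_n\in L\eps\Z^2\}$ coarse enough that $A^L(\mathcal E)\setminus\{z_n\in L\eps\Z^2\}$ actually equals $A^L(\mathcal E)\setminus\{z_n=0\}$ near the relevant region, or at least is contained in a neighborhood of $A^L(\mathcal E)$ minus a single hyperplane, whose cylindrical capacity tends to $\ehzcap(\mathcal E\cap\{z_n=0\})$ by the Proposition.

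Then I would compute $\ehzcap(\mathcal{E}\cap\{z_n=0\})$, equivalently $\overline c$ of the image of $B^{2n}(1)\cap E_t$-type slice, and check it equals $t$ (for a single plane of K\"ahler angle $t$ the relevant symplectic area of the projection is $t$, cf. the ellipse $E$ of area $t$ appearing in Section~\ref{sec-removing-2-codim-hyperplane}); combined with the $\eps$-dependent error from the thin $z_n$-direction and from replacing $B^{2n}(1)$ by a slightly larger ball (as in the proof of Proposition~\ref{prop-cylindrical-upper-bnd}), this gives $\overline c(B^{2n}(1)\setminus\Sigma_\eps^t)\le t+\delta$ once $\eps$ is small enough. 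The main obstacle is the first half of the middle paragraph: making rigorous that one may legitimately feed a non-convex, punctured domain into the convex-body estimate of Proposition~\ref{upper_bound}. The way around it is to never actually need convexity of the punctured domain — instead, embed $B^{2n}(1)\setminus\Sigma_\eps^t$ into (a small enlargement of) the convex body $\mathcal E$ with only the single hyperplane $\{z_n=0\}$ removed, using an area-preserving map in the $z_n$-plane that pushes the complement of the grid into the complement of a neighborhood of the full $\eps\Z^2$ and then into the complement of $\{0\}$, and only then apply Proposition~\ref{upper_bound} to the genuinely convex $\mathcal E$. One must also track that all these modifications cost at most $\delta$ in the radius, which is routine given that the $z_n$-contribution scales like $\eps^2$ (or is killed entirely by the $A^L$ stretch) and the ball-enlargement costs an arbitrarily small amount as in Proposition~\ref{prop-cylindrical-upper-bnd}.
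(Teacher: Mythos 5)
Your overall architecture --- reduce by a linear symplectic map realizing the K\"ahler angle, stretch the $z_n$-direction by $A^L$, invoke Proposition~\ref{upper_bound}, and compute that the slice has capacity $t$ --- is exactly the paper's, and the last step (the slice $M_tB^{2n}(1)\cap\{z_n=0\}$ is linearly symplectomorphic to $E(1,\dots,1,t)$, of capacity $t$) is fine. The gap is in the middle step, which is the only place where the grid structure of $\Sigma_\eps$ can enter, and both mechanisms you offer for it fail. The claim that choosing $L=L(\eps)\to\infty$ with $L\eps\to\infty$ makes the grid ``coarse enough that $A^L(\mathcal E)\setminus\{z_n\in L\eps\Z^2\}$ equals $A^L(\mathcal E)\setminus\{z_n=0\}$ near the relevant region'' is false: $A^L$ stretches the $z_n$-projection of $\mathcal E$ by the same factor $L$ by which it stretches the grid, so the number of removed planes meeting the body is independent of $L$. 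More fundamentally, your fallback --- embed $B^{2n}(1)\setminus\Sigma_\eps^t$ into $\mathcal E$ with only the single hyperplane $\{z_n=0\}$ removed, and only then apply Proposition~\ref{upper_bound} to the convex $\mathcal E$ --- cannot work. Since $\{z_n=0\}\subset\Sigma_\eps$, the inclusion into the complement of a single plane is automatic, but by Theorem~\ref{thm-complement-plane} that complement has cylindrical capacity $\frac{1+t}{2}>t$; any argument that retains only one removed plane is capped at $\frac{1+t}{2}$. Moreover, Proposition~\ref{upper_bound} compares $\ehzcap(A^LK)$ with the slice of a \emph{convex} $K$; it provides no bridge from ``$X$ embeds in $\mathcal E\setminus\{z_n=0\}$'' to ``$\overline c(X)\le\lim_L\ehzcap(A^L\mathcal E)$'', and no such bound below $\frac{1+t}{2}$ holds for $\mathcal E\setminus\{z_n=0\}$ itself.

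What is actually needed --- and what the paper imports from the proof of Theorem 1.3 in \cite{HKHO} --- is the quantitative inequality
$$\overline c\bigl(M_tB^{2n}(1)\setminus\Sigma_\eps\bigr)\le\Bigl(1+\tfrac{\sqrt2\,\eps L}{\lambda(A^LK)}\Bigr)^2\,\overline c\bigl(A^LM_tB^{2n}(1)\bigr),$$
i.e., a symplectic embedding of the ball-minus-grid into a $\bigl(1+O(\eps L)\bigr)$-enlargement of the \emph{stretched} ellipsoid, built from an area-preserving map of the $z_n$-plane that genuinely uses all the lattice points of $\eps\Z^2$ (this is the analogue of your ``push each punctured $\eps$-cell into a thin region'' remark, but it must be carried out so that the output lands in $A^LM_tB^{2n}(1)$, not merely away from the origin). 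With this in hand the quantifiers go in the opposite order from yours: first fix $L$ large so that $\overline c(A^LM_tB^{2n}(1))\le t+\delta/2$ by Proposition~\ref{upper_bound}, and only then take $\eps$ small so that the prefactor is close to $1$. Without this inequality (or an equivalent embedding statement) your argument does not get below $\frac{1+t}{2}$.
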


\begin{proof}
Let $0<t<1$, and consider the  symplectic  matrix (cf. Example 2.2. in \cite{HKHO}) 
	\[
	M_t =\left(\begin{array}{c|c}
	 \mathds{1}_{{\mathbb R}^{2n-4} } & \bigzero\\
	\hline \\*
	\bigzero & 
	\begin{array}{cccc}
		\frac{t}{\sqrt{1-t^2}} & 0 & 0 & -1 \\
		0 & \frac{\sqrt{1-t^2}}{t} & 0 & 0 \\
		0 & -1 & \frac{t}{\sqrt{1-t^2}} & 0 \\
		0 & 0 & 0 & \frac{\sqrt{1-t^2}}{t}
	\end{array}\\
	\\
	\end{array}\right).
	\]
	It follows from the proof of Theorem 1.3 in \cite{HKHO} that for every $L > 1$
	$$ \overline{c}(M_t B^{2n}(1) \setminus \Sigma_\eps) \leq \left( 1 + \frac{\sqrt{2} \eps L}{\lambda(A^L K)} \right)^2 \overline{c}(A^L M_t B^{2n}(1)), $$
 where $\lambda(K) := \inf_{u \in \bS^{2n-1}} h_K(u)$, and $\Sigma_\eps$ is the family of  complex planes~\eqref{Sec3-family-of-hyperplanes}.
	Since $A^L M_t B^{2n}(1)$ is a centrally symmetric ellipsoid, Proposition \ref{upper_bound} implies that for every (normalized) symplectic capacity $c$ one has
	$$ \lim_{L \to \infty} c(A^L M_t B^{2n}(1)) \leq c(M_t B^{2n}(1) \cap \{z_n=0\}). $$
A direct computation shows that $M_t B^{2n}(1) \cap \{z_n=0\}$ is linearly symplectomorphic to the symplectic ellipsoid $E(1,\ldots,1,t) = \left\{ \pi \left( \sum_{i=1}^{n-2} |z_i|^2 + \frac{|z_{n-1}|^2}{t} \right) \leq 1 \right\}$ which has capacity $t$. 
Note that the normals to $M_t^{-1} \Sigma_\eps$ (in the $(\overline x, \overline y)$-coordinate system) are 
	$$ n_1 = (0,\ldots,0,0,-\sqrt{1-t^2},t,0), \quad\quad n_2 = (0,\ldots,0,0,0,0,1), $$
	and hence $\omega(n_1,n_2) = t$.
	Overall, for any $\delta > 0$ and $0<t<1$, there exist $L \gg 1$ and $\eps > 0$ such that 
	$$ \overline{c}(B^{2n}(1) \setminus M_t^{-1} \Sigma_\eps) \leq t + \delta, $$
and the proof of the proposition is thus complete. 
\end{proof}

We turn now to obtain the required lower bound for the Hofer-Zehnder capacity. 
To this end we shall need the following

\begin{proposition} \label{lem-lower-bound-intersection}
Let $t \in (0,1)$ and $\varepsilon > 0$, and 
let $A$ be a symplecitc matrix such that $A\Sigma^t_\eps = \Sigma_\eps$. Then for every $\delta >0$ there is a symplectic embedding
$$ (1-\delta) (AB^{2n}(1) \cap W^{2n}) 
 \xhookrightarrow{\mathrm s}  B^{2n}(1) \setminus \Sigma^t_{\varepsilon},
$$ 
  where
	 $$ W^{2n} := \left( \{x_n = 0, y_n=0\} \cap A B^{2n}(1) \right) \times {\rm span}\{x_n,y_n\}. $$
\end{proposition}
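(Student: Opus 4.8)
The plan is to build the embedding directly, exploiting the product structure of $W^{2n}$. Write $AB^{2n}(1)$ as a symplectic ellipsoid $\mathcal{E}$; then $AB^{2n}(1)\cap W^{2n}$ is the set of points whose first $2n-2$ coordinates lie in the ``slice'' $\Omega := \{x_n=y_n=0\}\cap \mathcal{E}$ and whose last coordinate $z_n$ is unconstrained in that slice's defining inequality except through the ellipsoid condition. More precisely, $\mathcal{E}\cap W^{2n}$ fibers over $\Omega$ (a domain in $\mathbb{C}^{n-1}$) with fiber over $w\in\Omega$ a disk in the $z_n$-plane whose area is the ``available room'' in $\mathcal{E}$ above $w$; call this area $a(w)$. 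Since $\Omega$ is itself an ellipsoid slice, one checks $a(w)$ is a positive affine function of $\pi|w_j|^2$ that stays bounded below on compact subsets. After applying $A^{-1}$ we need the image inside $B^{2n}(1)\setminus\Sigma^t_\eps$; but it is cleaner to work in the $\Sigma_\eps$ (complex) picture: by Proposition~\ref{upper_bound}'s setup / the normalization in Proposition~\ref{prop-upper-bound-family-hyperplanes}, $A$ conjugates $\Sigma^t_\eps$ to $\Sigma_\eps=\bigcup\{z_n\in\eps\mathbb{Z}^2\}$, so it is equivalent to embed $(1-\delta)(\mathcal{E}\cap W^{2n})$ into $\mathcal{E}\setminus\Sigma_\eps$ where $\mathcal{E}=AB^{2n}(1)$, and then transport back by $A^{-1}$ which carries $\mathcal{E}\setminus\Sigma_\eps$ to $B^{2n}(1)\setminus\Sigma^t_\eps$.

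First I would reduce to the fibered picture over the slice: using a symplectic identification of $\mathbb{C}^{n-1}\times\mathbb{C}$, realize both $\mathcal{E}\cap W^{2n}$ and $\mathcal{E}\setminus\Sigma_\eps$ as subsets fibered over $\Omega\subset\mathbb{C}^{n-1}$, with fibers in the $z_n$-plane. The source fiber over $w$ is the disk $D(a(w))$; the target fiber over $w$ is $D(\tilde a(w))\setminus \eps\mathbb{Z}^2$ where $\tilde a(w)\ge a(w)$ is the room in $\mathcal{E}$ (the two radial profiles agree since $W^{2n}$ uses the full ellipsoid slice). Second, the core step: construct, fiberwise and smoothly in $w$, an area-preserving embedding of $D((1-\delta)a(w))$ into $D(a(w))\setminus\eps\mathbb{Z}^2$. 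For $\eps$ fixed this is the statement that a slightly-smaller disk embeds symplectically in a disk with a lattice of punctures removed; one removes a thin ``tentacle'' neighborhood of a tree connecting the punctures to the boundary, which costs arbitrarily little area, and applies the two-dimensional result (Lemma 3.1.5 in~\cite{schl}, as used in Proposition~\ref{c3}) to squeeze $D((1-\delta)a(w))$ into the resulting simply-connected region of area $\ge (1-\delta/2)a(w)$. Making the tentacle-removal and the squeezing depend smoothly on the parameter $w$ (over a compact set, since we may pre-shrink by $(1-\delta)$ and exhaust) yields a genuine symplectomorphism-onto-image of the total space; third, conjugate by $A^{-1}$ and compose with the inclusion-rescaling to land in $B^{2n}(1)\setminus\Sigma^t_\eps$.

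A cleaner route to the fiberwise family, avoiding explicit dependence on $w$, is to note that the whole configuration is invariant under the $\mathbb{C}^{n-1}$-part being rescaled, so it suffices to (i) symplectically embed $\mathcal{E}\cap W^{2n}$ into a product $\Omega'\times D(c)$ with $\Omega'$ a slightly larger ellipsoid and $c$ a constant (possible because $a(w)$ is bounded; push the fibered region into a genuine product by a fiberwise area-preserving map, a standard ``straightening'' as in~\cite{schl}), then (ii) embed $\Omega'\times (D(c)\setminus\eps\mathbb{Z}^2)$ back, and (iii) on the product $\Omega'\times D(c)$ run the single two-dimensional puncture-avoidance construction in the second factor only, $\mathrm{Id}\times\varphi$. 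Step (iii) is literally Gromov-width-in-dimension-two applied in a product, so no parameter dependence is needed; shrinking by $(1-\delta)$ absorbs the area losses in (i) and (iii).

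The main obstacle is the fiberwise area bookkeeping together with smoothness in the parameter $w$: one must verify that $a(w)$ — the room left in the ellipsoid $\mathcal{E}$ over a point $w$ of its own slice $\Omega$ — is, after the $(1-\delta)$ shrink, uniformly bounded below on a compact exhaustion, so that a \emph{fixed} amount of area suffices for the tentacle removal across all fibers, and that the straightening map and the squeezing $\varphi$ can be chosen to depend smoothly (indeed symplectically as a map of the total space) on $w$. If one instead uses the product-reduction route in the previous paragraph, the only real content is that $a(w)$ is bounded (so a product $\Omega'\times D(c)$ receives everything) and positive on the relevant compact set (so the reverse embedding exists) — both of which follow from the explicit affine form of $a(w)$ coming from $\mathcal{E}$ being an ellipsoid. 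Everything else is an assembly of the two-dimensional embedding lemma, Lemma~\ref{lem-pushing-along-vector-field}-type arguments, and the linear change of coordinates $A$.
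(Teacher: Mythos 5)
Your reduction via $A$ (so that the target becomes $AB^{2n}(1)\setminus\Sigma_\eps$) and your description of the fibers over the slice $\Omega$ are fine, and the two‑dimensional core step is correct: a disk of slightly smaller area does embed symplectically in a disk with finitely many punctures, by deleting thin slits to the boundary. The gap is in the assembly. A fiber‑preserving map $\Psi(w,z_n)=(w,\psi_w(z_n))$ in which each $\psi_w$ is area‑preserving is \emph{not} a symplectomorphism of $\C^{n-1}\times\C$ unless $\psi_w$ is independent of $w$: one computes $\Psi^*\omega=\omega+(\hbox{terms involving }dw)$, and smoothness in $w$ does nothing to kill these cross terms. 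The usual fix --- generating the family by a Hamiltonian $H(w,z_n)$ --- does not apply here either, because the flow of such an $H$ has nonzero $w$‑components and therefore does not preserve the fibers; and your family cannot be made $w$‑independent, since the fiber area $a(w)$ degenerates to $0$ as $w\to\partial\Omega$ and the finite set of punctures $\eps\Z^2\cap\{z_n:(w,z_n)\in\mathcal{E}\}$ changes discontinuously with $w$ (so even the choice of "tentacle tree" cannot vary continuously). Your alternative "product reduction" does not escape this: steps (i) and (ii), straightening $\mathcal{E}\cap W^{2n}$ to a product $\Omega'\times D(c)$ and embedding back, are fiberwise constructions of exactly the same kind, and in addition the straightening would have to preserve the $z_n$‑coordinate on the nose, since otherwise the punctures are no longer located at $\eps\Z^2$ in the straightened picture. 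As written, the proposal therefore does not produce a symplectic embedding.

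The paper sidesteps the fiberwise issue entirely by moving each affine plane $\C^{n-1}\times\{p\}$, $p\in\eps\Z^2\setminus\{0\}$, \emph{rigidly and all at once}: such a plane lies in the hyperplane $\{\langle x,Jv\rangle=0\}$, where $v$ is the direction of $p$, and the linear Hamiltonian $-\langle x,Jv\rangle$, multiplied by a cut‑off supported near the portion $W^{2n}_v$ of that hyperplane, has Hamiltonian vector field equal to the constant translation $v$ there. Summing finitely many such Hamiltonians (one per direction, plus one for the plane through the origin) gives a single global compactly supported isotopy pushing $\Sigma_\eps\cap AB^{2n}(1)$ into an arbitrarily small neighborhood of $\partial(AB^{2n}(1)\cap W^{2n})$, hence off the shrunk body $(1-\delta)(AB^{2n}(1)\cap W^{2n})$; the inverse of this isotopy is the desired embedding. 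If you want to rescue your approach, this is the ingredient to import: replace the fiber‑by‑fiber puncture avoidance by a global displacement of the punctured planes, so that no parametric family of area‑preserving maps ever needs to be glued into a symplectomorphism.
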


\begin{proof}
    The idea of the proof is to ``push"
  $\Sigma_\eps$ to a small neighborhood of the boundary of $A B^{2n}(1) \cap W^{2n}$ via a symplectic isotopy $\phi_t$ (see Figure \ref{figure-hyperplanes-push}). 
Then, $\phi^{-1}$ will give the required embedding.	More precisely, let $\{u_i\}$ be the set of all the directions connecting points in $\eps \Z^2 \setminus \{0\} \cap {\rm proj}_{z_n} A B^{2n}(1)$ to the origin. For every $v \in \{u_i\}$, set $l_v := \{\lambda v : \lambda > \delta_{v} \}$ such that $\cup_i \{l_{u_i}\} $ cover all the points in $\eps \Z^2 \setminus \{0\}$ (see Figure \ref{figure_push_points}).
	Next,  consider 
	 $$ W^{2n}_v :=  \left( \{x_n = 0, y_n=0\} \cap A B^{2n}(1) \right) \times l_v, $$
	 and define a function $\alpha_v: A B^{2n}(1) \cap \{x : \langle x,Jv \rangle = 0\} \to \R$ such that it vanishes outside $W^{2n}_v$, is equal to $1$ in the interior of $W^{2n}_v$, and with some smooth cut-off in between (see Figure \ref{figure_single_line}).
	 Define a smooth function $H_v(x)$ such that it equals $-\alpha_v(x) \langle x, Jv \rangle $ in a small neighborhood of $ W^{2n}_v \subset \{x : \langle x,Jv \rangle = 0\} $, and vanishes outside a slightly larger  neighborhood  (by choosing the former neighborhood small enough, one can make sure that the supports of $H_{v_i}$ are disjoint). Note that the Hamiltonian vector field is equal to $X_{H_v}(x) = \alpha_v(x) v$ for $x$ such that $\langle x,Jv \rangle = 0$ and $\langle x,v \rangle > \delta_v $. Hence, the Hamiltonian function  $\sum_i H_{v_i}$ generates  the required flow. Note that in a similar way one can also push the codimension-two hyperplane passing through the origin in $\Sigma_{\eps}$ to the boundary of $AB^{2n}(1) \cap W^{2n}$. This completes the proof of the lemma. 
\end{proof} 

\begin{figure}[h]
\centering
	\includegraphics[width=0.45\textwidth]{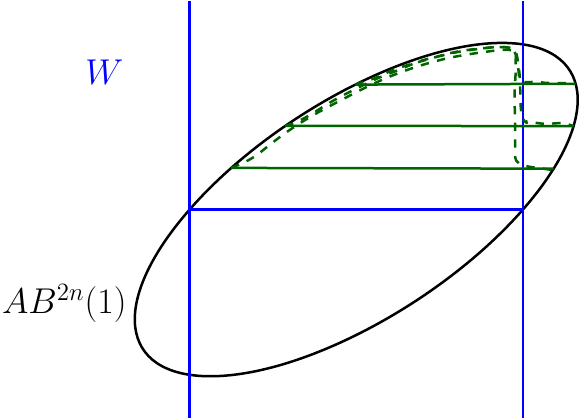}
 \caption{Pushing the hyperplanes arbitrarily close to the boundary.}
 \label{figure-hyperplanes-push}
 \end{figure}

 \begin{figure}[H]
\centering
\begin{minipage}{.45\linewidth}
 \includegraphics[width=\linewidth]{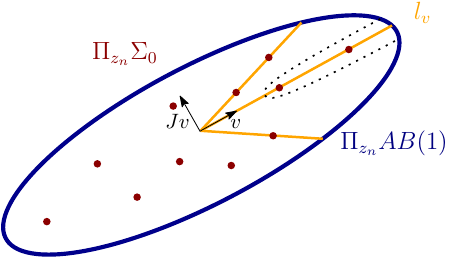}
 \caption{The line $l_v$ together with the support of the function $H_v$.}
 \label{figure_push_points}
\end{minipage}
\hspace{.05\linewidth}
\begin{minipage}{.45\linewidth}
 \includegraphics[width=0.9\linewidth]{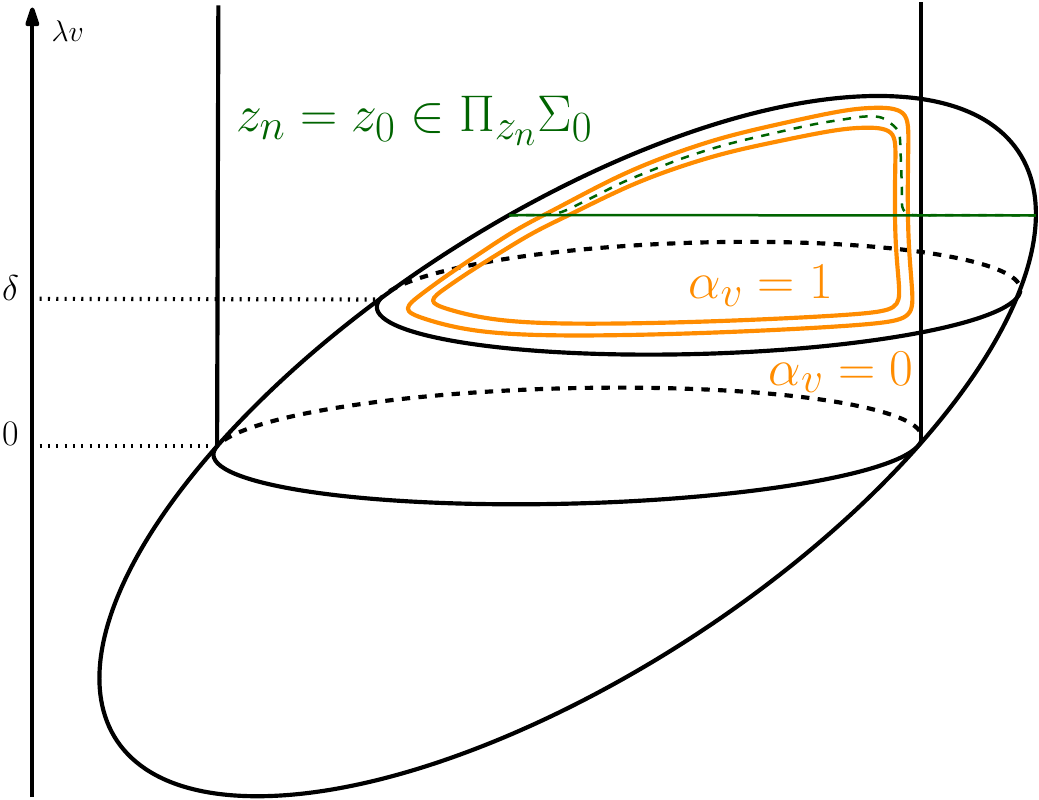}
 \caption{The domain $W^{2n}_v$ in the $(2n-1)$-space $\{\langle x,Jv \rangle = 0\}$.
 The flow of $H_v$ pushes $\{ z_n = z_0 \}$ close to the boundary.}
 \label{figure_single_line}
\end{minipage}
\end{figure}

From Proposition~\ref{lem-lower-bound-intersection} it follows that in order to obtain a lower bound for the capacity of $B^{2n}(1) \setminus \Sigma^t_{\varepsilon}$ it is enough to find a lower bound for the capacity of the intersection $AB^{2n}(1) \cap W^{2n}$, which is a convex domain in ${\mathbb R}^{2n}$. We will show below that the minimal action capacity of this domain equals $t$, which is suffices for the proof of Theorem~\ref{Thm-union-of-planes}. 
\begin{remark} {\rm In light of the well-known conjecture that all symplectic capacities coincide on convex domains in the classical phase space, one expects that the Gromov width of the intersection 
$AB^{2n}(1) \cap W^{2n}$ also equals $t$. This, in turn, would imply that Theorem~\ref{Thm-union-of-planes} holds for the Gromov width as well. However, we are only able to show that the Gromov width of the above  intersection is bounded below by $t-0.07$ as stated in    Theorem~\ref{thm-GW-capacity-Gromov-width}.
}
\end{remark}

\begin{proposition} 
\label{prop-EHZ-capacity-intersection} 
With the above notations one has
$$\ehzcap(AB^{2n}(1) \cap W^{2n}) = t. $$
\end{proposition}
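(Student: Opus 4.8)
The plan is to compute the Ekeland--Hofer--Zehnder capacity of the convex body $K := AB^{2n}(1)\cap W^{2n}$ directly. First I would unwind what $K$ actually is. Since $A$ is a symplectic matrix, $AB^{2n}(1)$ is a symplectic ellipsoid, and intersecting with $W^{2n}$ — which is the product of the ``slice'' $\{x_n=y_n=0\}\cap AB^{2n}(1)$ with the full $z_n$-plane — produces a body of the form $D \times \mathrm{span}\{x_n,y_n\}$ where $D = AB^{2n}(1)\cap\{z_n=0\}$ is a convex body in $\CC^{n-1}$. The key observation is that such a ``cylinder over $D$'' has its minimal closed characteristic entirely supported in the $D$-factor, so that $\ehzcap(D\times\RR^2) = \ehzcap(D)$ — the $z_n$-direction contributes nothing because one can take $z_n$ constant along the orbit. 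I would justify this by Clarke's dual action principle \eqref{clark_eq}: a minimizer for $D\times\RR^2$ can be taken with vanishing $z_n$-component, reducing to the minimization for $D$ itself. (One has to be slightly careful since $K$ is only convex, not strictly convex or smooth; I would either approximate or quote the semicontinuity of $\ehzcap$ under Hausdorff limits of convex bodies, exactly as in the proof of Proposition~\ref{upper_bound}.)

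Next I would identify $D = AB^{2n}(1)\cap\{z_n=0\}$ up to linear symplectomorphism. This is essentially the same computation as in the proof of Proposition~\ref{prop-upper-bound-family-hyperplanes}: the matrix $A$ (which plays the role of $M_t^{-1}$ there, up to the ordering of coordinates) was constructed precisely so that $A\Sigma^t_\eps = \Sigma_\eps$, i.e., so that it straightens the K\"ahler-angle-$t$ planes into complex planes; dually, $A$ restricted to the complementary directions distorts $B^{2n}(1)\cap\{z_n=0\}$ into an ellipsoid whose last ``semi-axis squared'' is scaled by the factor governing the K\"ahler angle. Concretely I expect $D$ to be linearly symplectomorphic to $E(1,\ldots,1,t)$, the symplectic ellipsoid $\{\pi(\sum_{i=1}^{n-2}|z_i|^2 + |z_{n-1}|^2/t)\le 1\}$, whose EHZ capacity is the smallest factor, namely $t$. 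Combining this with the cylinder reduction of the previous paragraph gives $\ehzcap(K) = \ehzcap(D) = \ehzcap(E(1,\ldots,1,t)) = t$, which is the claim.

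The main obstacle I anticipate is the bookkeeping in the linear-algebra step: carefully tracking which coordinate block $A$ acts on, verifying that $A$ is symplectic, and confirming that the restriction $A|_{\{z_n=0\}}$ really does send $B^{2n}(1)\cap\{z_n=0\}$ to (a symplectic image of) $E(1,\ldots,1,t)$ rather than some other ellipsoid. Since the paper has already done the analogous computation for $M_t$ in Proposition~\ref{prop-upper-bound-family-hyperplanes} and recorded that $M_tB^{2n}(1)\cap\{z_n=0\}$ is symplectomorphic to $E(1,\ldots,1,t)$ with capacity $t$, I would lean on that: take $A$ to be (conjugate to) $M_t^{-1}$ composed with the coordinate permutation that sends the distinguished plane to the $z_n$-plane, so that $A\Sigma^t_\eps = \Sigma_\eps$ holds and simultaneously $AB^{2n}(1)\cap\{z_n=0\}$ is the desired ellipsoid. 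A secondary technical point is the smoothness/strict-convexity caveat for applying Clarke's principle, handled by the standard approximation argument. Modulo these, the proof is short: reduce the cylinder to its base, identify the base as $E(1,\ldots,1,t)$, read off the capacity $t$.
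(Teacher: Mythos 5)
Your proposal breaks down at the very first step: the body in question is not a cylinder over $D := AB^{2n}(1)\cap\{z_n=0\}$. By definition $W^{2n} = D\times\operatorname{span}\{x_n,y_n\}$ \emph{is} that unbounded cylinder, but the proposition concerns the intersection $K = AB^{2n}(1)\cap W^{2n}$, which is a compact convex body properly contained in it (points of $W^{2n}$ with $|z_n|$ large lie outside the ellipsoid $AB^{2n}(1)$). So the identification ``$K = D\times\operatorname{span}\{x_n,y_n\}$'' and the ensuing reduction $\ehzcap(K)=\ehzcap(D)$ do not apply. What your argument does give, via monotonicity of $\ehzcap$ on convex domains ($K\subset W^{2n}$, and the closed characteristics of $\partial W^{2n}$ all lie in $\partial D\times\{\mathrm{pt}\}$, so $\ehzcap(W^{2n})=\ehzcap(D)=\ehzcap(E(1,\dots,1,t))=t$), is the \emph{upper} bound $\ehzcap(K)\le t$; your identification of the slice $D$ with $E(1,\dots,1,t)$ is correct and matches the computation done elsewhere in the paper. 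But the substantive content of the proposition is the lower bound: that no closed generalized characteristic on $\partial K$ has action less than $t$. This is entirely missing from your proposal, and it is not a consequence of any ``capacity of a cylinder equals capacity of its base'' principle.

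The difficulty is that $\partial K$ is non-smooth, with two strata $S_1=\partial(AB^{2n}(1))\cap W^{2n}$ and $S_2=AB^{2n}(1)\cap\partial W^{2n}$ meeting along a corner, and the paper's proof is a case-by-case analysis of all closed characteristics in dimension four: orbits lying entirely in $S_1$ (action $1$), entirely in $S_2$ (action $t$), orbits confined to the corner $S_1\cap S_2$ (action $t$ or $t(3-4t^2)\ge t$, after determining which convex combinations of the two normals can be tangent to the corner), and orbits alternating between $S_1$ and $S_2$, which are shown to have action strictly greater than $t$ by comparing the angular increments of the projections to the $z_2$-plane along arcs of the two foliations. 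The general case $n>2$ is then reduced to $n=2$ via the $\times_2$-product formula for $\ehzcap$. Nothing in your outline substitutes for this analysis; in particular Clarke's dual action principle gives $\ehzcap(K)\le\ehzcap(W^{2n})$ from $h_K\le h_{W^{2n}}$, but since $h_K$ is genuinely smaller than $h_{W^{2n}}$ in directions with nonzero $z_n$-component, it does not yield the matching lower bound.
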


\begin{proof}
We start with the 4-dimensional case.  Based on the definition of the Ekeland-Hofer-Zehnder capacity and the fact that $A$ is a symplectic matrix, it suffices  to show that any closed characteristic with minimal action on the boundary of $B^{4}(1) \cap A^{-1}W^4$ has action $t$.
For this end,
without loss of generality, one can choose $A$ to be the matrix 
\begin{align} \label{matrix-A-in-int-lemma}
	A^{-1} =\frac{1}{\sqrt{2}}\left(
	\begin{array}{cccc}
	\frac{\sqrt{1+t}}{\sqrt{t}} & 0 &  \frac{\sqrt{1-t}}{\sqrt{t}} & 0 \\
	0 &  \frac{\sqrt{1+t}}{\sqrt{t}} & 0 & -\frac{\sqrt{1-t}}{\sqrt{t}}  \\
	\frac{\sqrt{1-t}}{\sqrt{t}} & 0 &  \frac{\sqrt{1+t}}{\sqrt{t}} & 0 \\
	0 & -\frac{\sqrt{1-t}}{\sqrt{t}} & 0 & \frac{\sqrt{1+t}}{\sqrt{t}}  \\
	\end{array}\right).
\end{align}
In this case the base of the cylinder $A^{-1}W^4$ is spanned by 
$$v_1 := \frac{1}{\sqrt{2}}(\sqrt{1+t},0,\sqrt{1-t},0) \ {\rm and} \ v_2 := \frac{1}{\sqrt{2}} (0,\sqrt{1+t},0,-\sqrt{1-t}).$$ Complete $v_1,v_2$ into an orthonormal basis with 
$$n_1 := \frac{1}{\sqrt{2}}(0,\sqrt{1-t},0,\sqrt{1+t}) \ {\rm  and} \ n_2 := \frac{1}{\sqrt{2}}(-\sqrt{1-t},0,\sqrt{1+t},0).$$
Denote  $S_1 := \partial B^{4}(1) \cap A^{-1}W^4$ and $S_2 := B^{4}(1) \cap \partial A^{-1}W^4$, and note that $\partial(B^{4}(1) \cap A^{-1}W^4)  = S_1 \cup S_2$. We classify the closed characteristics on the boundary $\partial(B^{4}(1) \cap A^{-1}W^4)$ by how many time they ``visit" the sets $S_1$ and $S_2$. More precisely, note that a closed characteristic which lies entirely in $S_1$ has action $1$, and a closed characteristic which is entirely in $S_2$ has action $t$. Moreover, such a characteristic in $S_2$ exists in the subspace spanned by $v_1,v_2$.
The other options include closed characteristics that pass between $S_1$ and $S_2$ and vice versa (maybe several times), and closed characteristics that stay in the intersection $S_1 \cap S_2$ for all time. We analyse the latter two options below. We remark that from the proof below it follows that any closed characteristic spending a non-discrete time in $S_1 \cap S_2$ must remain within this intersection indefinitely.

\medskip
We start with some general observations regarding the Reeb dynamics that will be useful later on.
Observe that the characteristics in $S_1$ are moving along two centred circles in the $z_1$ and $z_2$-coordinate planes respectively, in the same direction and the same angular speed. The sum of the areas enclosed by the two circles is $1$.
On the other hand, a direct computation shows that the characteristics in $S_2$ are moving along two non-centred circles in the $z_1$, $z_2$-coordinate planes respectively, in opposite directions, and with the same angular speed. The areas enclosed by these circles are $\frac{1+t}{2}$ and $\frac{1-t}{2}$, respectively.
\begin{figure}[h]
\centering
\begin{minipage}{.4\linewidth}
  \includegraphics[width=\linewidth]{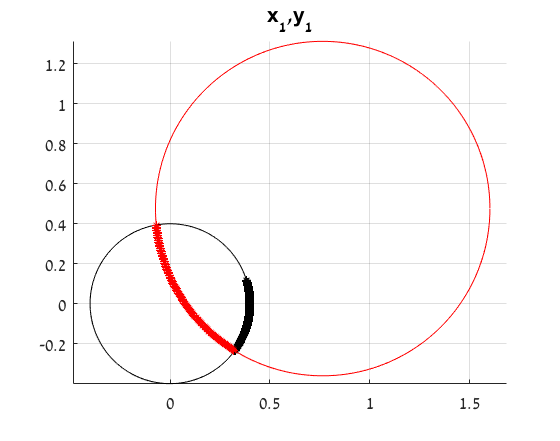}
\end{minipage}
\hspace{.01\linewidth}
\begin{minipage}{.4\linewidth}
  \includegraphics[width=\linewidth]{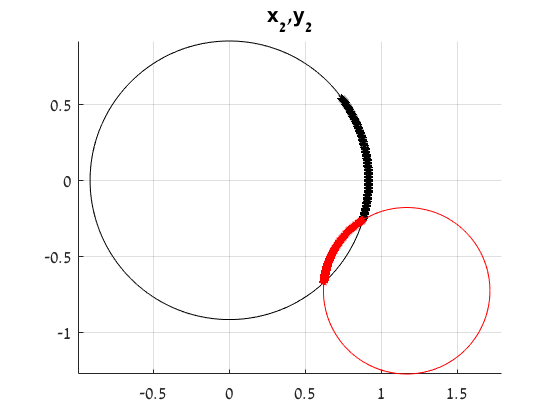}
\end{minipage}
\caption{The projections to the $z_1,z_2$ planes of the closed characteristic of $B^4(1)$ in black, and the closed characteristic of $A^{-1}W^4$ in red. The bold part is the section of the characteristic contained in $B^4(1) \cap A^{-1}W^4$.}
\end{figure}

\medskip

Using the fact that $p \in A^{-1}W^4$ if and only if 
$$\langle p , Jv_1 \rangle^2 + \langle p , Jv_2 \rangle^2 \leq \frac{t^2}{\pi}, $$
one can write
\begin{align*}
S_1 &= \{ x_1 J n_1 + x_2 J n_2 + x_3 J v_1 + x_4 J v_2 \, | \, \sum_{i=1}^4 x_i^2 = \frac{1}{\pi}, x_3^2+x_4^2 \leq \frac{t^2}{\pi} \}  \\
S_2 &= \{ x_1 J n_1 + x_2 J n_2 + x_3 J v_1 + x_4 J v_2 \, | \, \sum_{i=1}^4 x_i^2 \leq \frac{1}{\pi}, x_3^2+x_4^2 = \frac{t^2}{\pi} \}. 
\end{align*}
Note that for a point $p \in S_1$, the (not normalized) outer normal equals $p$, and the characteristic direction is $J p$. On the other hand, for $p \in  S_2$, the characteristic direction is $J n_{A^{-1}W^4}(p) $, where 
$$n_{A^{-1}W^4}(p) = \langle p, Jv_1 \rangle Jv_1 + \langle p, Jv_2 \rangle Jv_2 $$ 
is the outer normal to $A^{-1}W^4$ at $p$. From now on we set $x_1,x_2,x_3,x_4$ for a point $p$ so that
\begin{align*}
p = x_1 Jn_1 + x_2 Jn_2 + x_3 Jv_1 + x_4 Jv_2.
\end{align*}
Moreover, a point $p$ in $A^{-1}W^4$ can be written as $p = \alpha_1 v_1 + \alpha_2 v_2 + \alpha_3 J n_1 + \alpha_4 J n_2$, where $\alpha_1^2 + \alpha_2^2 \leq \frac{1}{\pi}$.
One can check that 
\begin{align*}
\alpha_3 = \frac{\langle p, n_2 \rangle}{t} = -\frac{\sqrt{1-t^2}}{t}x_4 + x_1, \quad\quad \alpha_4 = -\frac{\langle p, n_1 \rangle}{t} = \frac{\sqrt{1-t^2}}{t}x_3 + x_2.
\end{align*} 
A direct computation shows that the projection of a Hopf circle passing through a point $(z_1,z_2)$ on the plane spanned by $Jv_1, Jv_2$ is an ellipse with area given by $|\pi \|z_1\|^2 - \frac{1-t}{2} |$.
Denote the projection of the Hopf circle on $z_i$ by $\gamma_i$, and the projection to the $Jv_1,Jv_2$ plane by $\widetilde{\gamma}$.
Denote the projection of the characteristics of $A^{-1}W^4$ on $z_i$ by $\eta_i$ (non-centred circles) and the projection on the plane spanned by $Jv_1,Jv_2$ by $\widetilde{\eta}$ (centred circle of radius $\frac{t}{\sqrt{\pi}}$). 

\begin{figure}[h]
\centering
\includegraphics[width=0.5\linewidth]{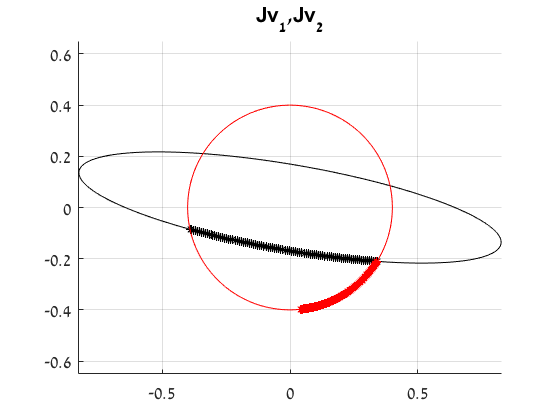}
\caption{The projection of the characteristics to the $(Jv_1,Jv_2)$-plane.}
\end{figure}

\medskip

We  analyse first closed characteristics entirely contained in the intersection of $S_1$ and $S_2$. 
Note that for $p \in S_1 \cap S_2$ the characteristic direction is  a non-negative linear combination of the two outer normals  $Jp$ and $Jn_{A^{-1}W^4}(p)$, denoted by $s := \alpha Jp + \beta Jn_{A^{-1}W^4}(p)$ for some $\alpha,\beta \geq 0$.
In order for the characteristic to stay in the intersection, 
 $s$ needs to be tangent to $S_1 \cap S_2$, meaning $\langle s, p \rangle = \langle s, n_{A^{-1}W^4}(p) \rangle = 0$, which is equivalent to $\langle p , J n_{A^{-1}W^4} (p) \rangle  = 0$.
This condition, together with the fact that $x_1^2 + x_2^2 = \frac{1-t^2}{\pi}, x_3^2+x_4^2 = \frac{t^2}{\pi}$, implies that 
$$ x_1 = \lambda x_4, \quad x_2 = -\lambda x_3, \quad\quad \text{where, } \quad \lambda = \pm \frac{\sqrt{1-t^2}}{t}. $$
We start with the case $\lambda = \frac{\sqrt{1-t^2}}{t}$. 
In order for the characteristic to remain in the intersection one also needs to require that $\langle s, Jn_1 \rangle = \lambda \langle s, Jv_2 \rangle$, and $\langle s, Jn_2 \rangle = -\lambda \langle s, Jv_1 \rangle$. This condition implies that $\alpha = 0$, i.e. the velocity is only ``coming" from $S_2$. 
The point $p$ in Euclidean coordinates is of the form 
$$ \left( \frac{\sqrt{1+t}}{\sqrt{2}t} x_4, -\frac{\sqrt{1+t}}{\sqrt{2}t} x_3, \frac{\sqrt{1-t}}{\sqrt{2}t} x_4, \frac{\sqrt{1-t}}{\sqrt{2}t} x_3 \right), $$
and in this case $\alpha_3 = \alpha_4 = 0$.
This is the same closed characteristic as in the case when the characteristic is contained in $S_2$, which has action $t$.
In addition, we claim that moving in the direction of the Hopf circle (and possibly leaving the intersection) is not possible. Indeed, recall that the projection of the Hopf circle $\widetilde{\gamma}$ passing through $p$ is an ellipse, which is tangent to $\widetilde{\eta}$, a circle of radius $t/ \pi$ (and area $t^2$). As the area of the ellipse is $t$, it must contain the circle, and hence the Hopf circle intersects  $A^{-1}W^4$ only in the tangency points.
This observation also implies that a characteristic cannot get to $p$ from $S_1$.

\medskip

We turn to case that $\lambda = - \frac{\sqrt{1-t^2}}{t}$.
The conditions  $\langle s, Jn_1\rangle = \lambda \langle s,  Jv_2 \rangle$ and $ \langle s, Jn_2\rangle = - \lambda \langle s, Jv_1 \rangle$ imply that $\beta = \left(\frac{1}{2t^2} -2\right)\alpha$. As $\alpha, \beta > 0$, we get that this case holds only for $t<\frac{1}{2}$. In this case,
$$ s = \frac{\alpha}{2t} \left( -x_2 Jn_1 + x_1 Jn_2 - x_4 Jv_1 + x_3 Jv_2 \right).$$ 
This creates a simultaneous circular movement in the $Jn_1, Jn_2$ and $Jv_1, Jv_2$ planes.
One can check that the projection of this orbit to the $z_1,z_2$ coordinates are also centred circles (rotating in opposite directions).
The point $p$ in Euclidean coordinates is of the form
$$ \left( \frac{\sqrt{1+t}}{\sqrt{2}}(2-\frac{1}{t}) x_4, - \frac{\sqrt{1+t}}{\sqrt{2}}(2-\frac{1}{t}) x_3, - \frac{\sqrt{1-t}}{\sqrt{2}}(2+\frac{1}{t}) x_4, -\frac{\sqrt{1-t}}{\sqrt{2}}(2+\frac{1}{t}) x_3 \right). $$
The symplectic action is thus
$$ \pi\|z_2\|^2 - \pi\|z_1\|^2 = t(3-4t^2). $$
Since $t<\frac{1}{2}$, the action is larger then $t$.
Similarly to the case of $\lambda = \frac{\sqrt{1-t^2}}{t}$, the projection of the Hopf circle, $\widetilde{\gamma}$, is an ellipse of area $t(1-2t^2)$, which is larger than $t^2$ (as $t<\frac{1}{2}$). Hence the ellipse is tangent to the circle of radius $t$ from the outside, which means that the direction of the Hopf circle does not interact with this characteristic.

\medskip 

It remains to consider the case of a characteristic which alternates between $S_1$ and $S_2$.
Assume without loss of generality that the starting point of the characteristic is in $S_1 \cap S_2$ and it moves along $S_2$ until it hits again the intersection.
We claim that the norms of the $z_1$ and $z_2$ coordinates are the same at these two points (before and after moving in $S_2$).
To show this, we calculate the intersection of a characteristic of $S_2$ with the boundary of the unit ball.
Recall that a possible representation of this characteristic is 
$$ \alpha_1 v_1 + \alpha_2 v_2 + \alpha_3 Jn_1 + \alpha_4 Jn_2 $$
for fixed $\alpha_3,\alpha_4$ and $\alpha_1^2 + \alpha_2^2 = \frac{1}{\pi}$.
In Euclidean coordinates this becomes
\begin{align*}
     \frac{1}{\sqrt{2}} \Bigl( & \alpha_1 \sqrt{1+t}  + \alpha_3 \sqrt{1-t}, \alpha_2 \sqrt{1+t} + \alpha_4 \sqrt{1-t} ,  \\
     &  \alpha_1 \sqrt{1-t} + \alpha_3 \sqrt{1+t}, -\alpha_2 \sqrt{1-t} - \alpha_4 \sqrt{1+t} \Bigl).
\end{align*}
This implies that
\begin{align*}
\|z_1\|^2 &= \frac{1}{2} \left( \frac{1+t}{\pi} + (1-t)(\alpha_3^2 + \alpha_4^2) + 2 \sqrt{1-t^2} (\alpha_1 \alpha_3 + \alpha_2 \alpha_4) \right), \\
\|z_2\|^2 &= \frac{1}{2} \left( \frac{1-t}{\pi} + (1+t)(\alpha_3^2 + \alpha_4^2) + 2 \sqrt{1-t^2} (\alpha_1 \alpha_3 + \alpha_2 \alpha_4) \right) .
\end{align*}
In the intersection $\|z_1\|^2 + \|z_2\|^2 = \frac{1}{\pi}$, and hence
$$ \|z_1\|^2 = \frac{1 + t}{2\pi} - \frac{t(\alpha_3^2 + \alpha_4^2)}{2}, \quad\quad \|z_2\|^2 = \frac{1 - t}{2\pi} + \frac{t(\alpha_3^2 + \alpha_4^2)}{2}. $$
Since these expressions are independent of $\alpha_1,\alpha_2$, we get that $\|z_i\|$, for $i=1,2$, is the same before and after the movement in $S_2$.
In addition, we note that as one varies $\alpha_1$ and $\alpha_2$, the change in $\| z_1 \|$ and $\| z_2 \|$ is the same. Since $\|z_1\|^2 + \|z_2\|^2 \leq \frac{1}{\pi}$, this means that the movement along $\eta_i$ lies inside the disc enclosed by $\gamma_i$.
We continue along movement on $S_1$, which has the same projection to $z_2$ as $\gamma_2$.
Consider a closed characteristic which starts with movement in $S_2$ with angular change $\theta_1$ along $\eta_2$, then movement on $S_1$ with angular change $\tau_1$ (see Figure \ref{S1S2orbit}), 
continuing with movements that alternate between $S_2$ and $S_1$ with angular movements $\theta_2,\tau_2,\ldots,\theta_k,\tau_k$.
Denote by $\widetilde{\theta}_i$ the angular change along $\gamma_2$ which corresponds to $\theta_i$.
\begin{figure}[H]
\centering
\includegraphics[width=0.7\linewidth]{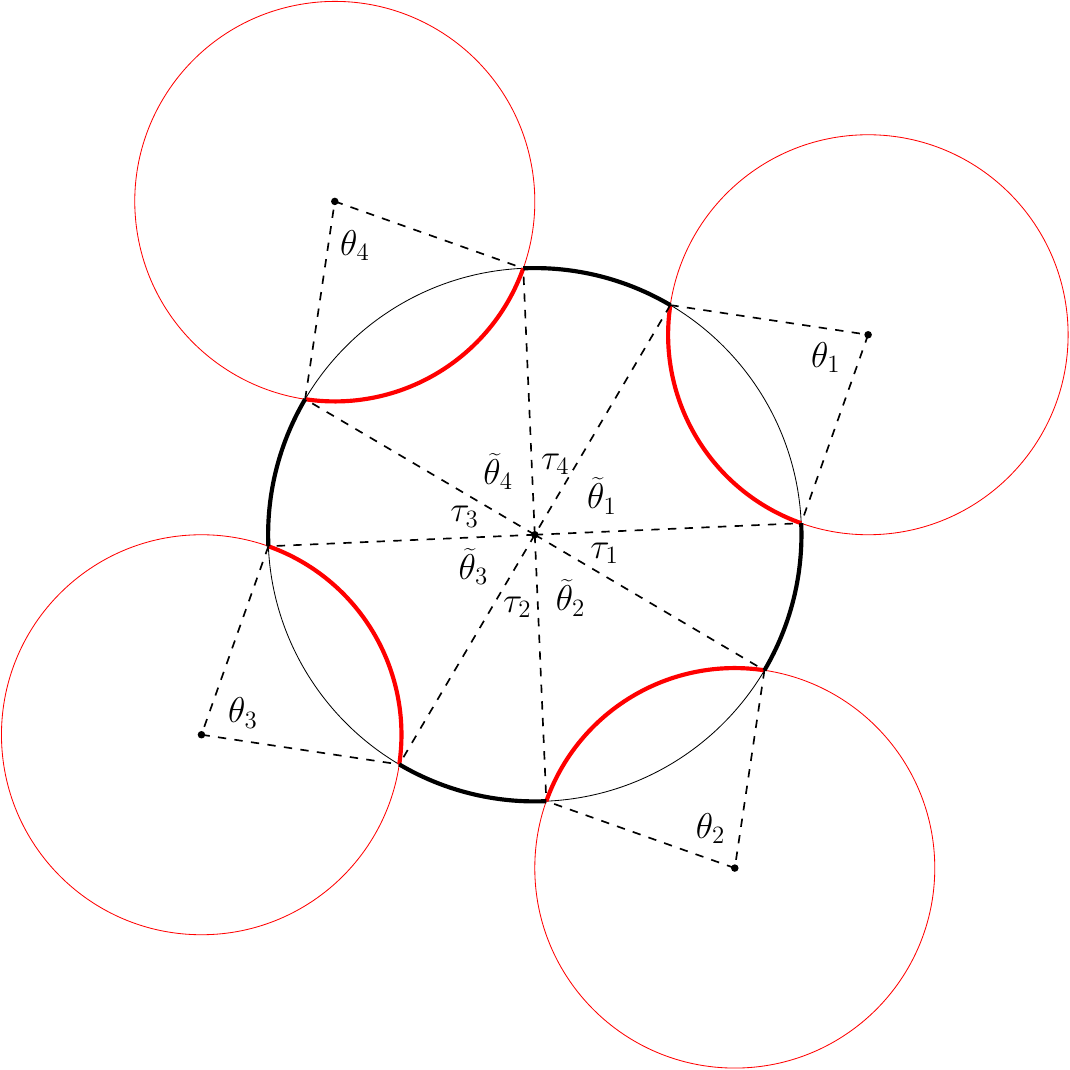}
\caption{The projection of a closed orbit alternating between $S_1$ and $S_2$ to the $z_2$ plane.}
\label{S1S2orbit}
\end{figure}
As the radius of $\eta_2$ (i.e., $\sqrt{{(1-t})/(2\pi)}$) is always smaller then the radius of $\gamma_2$ (i.e., $\|z_2\| $), we get that $\widetilde{\theta}_i < \theta_i$.
Hence the action of the loop is
$$ \mathcal{A} = \sum_{i=1}^k \frac{\theta_i}{2 \pi} t + \frac{\tau_i}{2 \pi} > \sum_{i=1}^k \frac{\widetilde{\theta}_i}{2 \pi} t + \frac{\tau_i}{2 \pi} > \left( \sum_{i=1}^k \frac{\widetilde{\theta}_i}{2 \pi} + \frac{\tau_i}{2 \pi} \right) t  \geq t, $$
where the last inequality is due to the fact that the loop is closed and the orientation of the loops does not change.
This completes the proof of the proposition in the 4-dimensional case. For a general dimension $n > 2$, note that one can assume that the symplectic matrix, which we now denote by $A_{2n}$ to distinguish it from the 4-dimensional case above, is of the form
\[
	A_{2n} =\left(\begin{array}{c|c}
	 \mathds{1}_{{\mathbb R}^{2n-4} } & \bigzero\\
	\hline \\*
	\bigzero & 
	A \\
	\\
	\end{array}\right),
	\]
where $A$ is the matrix~\eqref{matrix-A-in-int-lemma}.  In this case one has 
$$ A_{2n} B^{2n}(1) \cap W^{2n} = B^{2n-4}(1) \times_2 (A B^4(1) \cap W^4),$$
where $\times_2$ stands for the symplectic $2$-product defined more generally for two convex domains  $K_1 \in {\mathbb R}^{2n}$ and $K_2 \in {\mathbb R}^{2m}$ by
$$ K_1 \times_2 K_2 := \bigcup_{0 \leq s \leq 1} \Bigl (   (1-s)^{1/2}K_1 \times s^{1/2}K_2 \Bigr).$$
From Proposition 1.5 in~\cite{HKO} one has
\begin{align*}
    \ehzcap(B^{2n-4}(1) \times_2 (AB^4(1) \cap W^4)) = \min\{ \ehzcap(B^{2n-4}(1)),  \ehzcap( AB^4(1) \cap W^4 )\} = t,
\end{align*}
which completes the proof of the proposition.
\end{proof}

\begin{proof}[{\bf Proof of Theorem~\ref{Thm-union-of-planes}}]
The upper bound follows immediately from 
   Proposition~\ref{prop-upper-bound-family-hyperplanes}, and the lower bound follows from the combination of Proposition~\ref{lem-lower-bound-intersection},
Proposition~\ref{prop-EHZ-capacity-intersection}, and the well known fact that the Hofer-Zehnder capacity coincide with the minimal action capacity for convex domains. 
\end{proof}

We turn now to the proof of Theorem~\ref{thm-GW-capacity-Gromov-width}, which shows, roughly speaking, that the Gromov width of  $B^{2n}(1) \setminus \Sigma_\eps^t$ is bounded below by an almost linear function.

\begin{proof}[{\bf Proof of Theorem~\ref{thm-GW-capacity-Gromov-width}}] 
For $0 < t < 1$, consider the function $f(t)$ defined by 
 \begin{align} \label{the-lower-bound-function}
	 f(t) := \sqrt{2\left(\frac{1}{t^2}-1\right)\left(\sqrt{1-t^2}-1\right)+1}.
\end{align}
 A direct computation shows that $f(t) \geq t - 0.07$ (see Figure~\ref{graph-function-f}). 
	 \begin{figure}[H]
	 	\includegraphics[width=\textwidth]{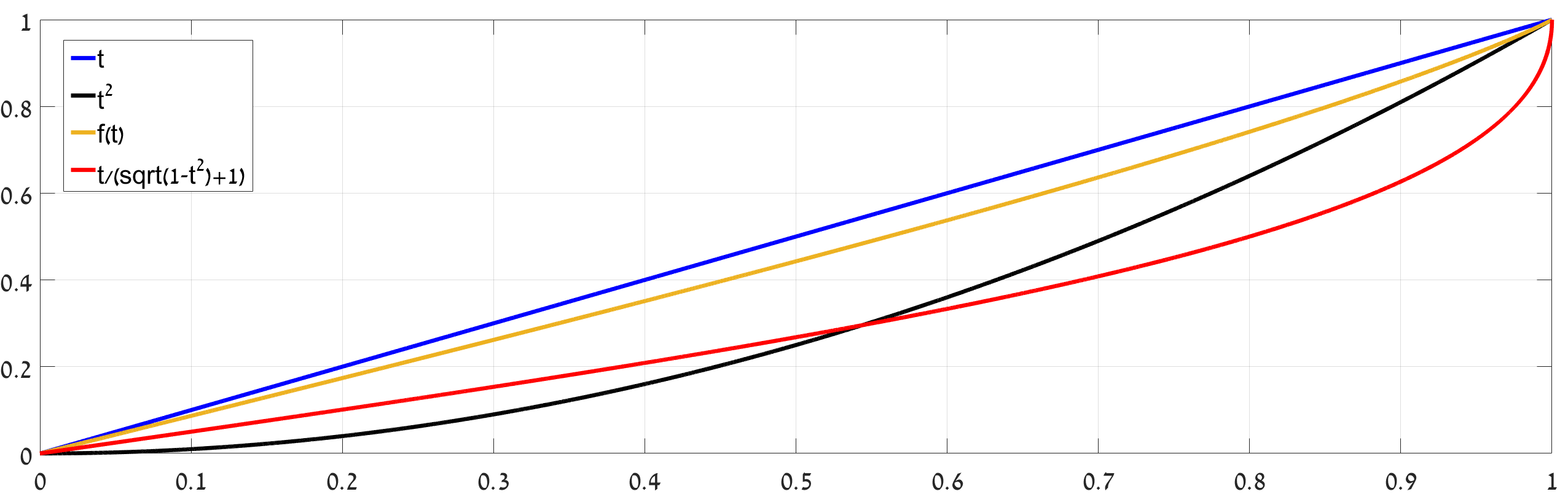}
	 	\caption{The different lower bounds together with the upper bound $t$.}
	 	\label{graph-function-f}
	 \end{figure}
From Proposition~\ref{lem-lower-bound-intersection} it follows that it is enough to show that 
$$ \gw(AB^{2n}(1) \cap W^{2n}) \geq f(t),$$
where $A$ is a symplectic matrix such that $A\Sigma^t_\eps = \Sigma_\eps$. 
We may assume without loss of generality that the outer normals to the hyperplanes in $\Sigma_\eps^t$ are given by
  $n_1 = (0,0,\ldots,0,\sqrt{1-t^2},0,-t,0)$ and $n_2 = (0,0,\ldots,0,0,0,0,1)$, written here in the $({\overline x},{\overline y})$-coordinate system. 
	Note that as symplectic matrix $A$ that maps  $\Sigma^t_\eps$ to $\Sigma_\eps$ one can now take
	\[
	A =\left(\begin{array}{c|c}
	 \mathds{1}_{{\mathbb R}^{2n-4} }& \bigzero\\
	\hline \\*
	\bigzero & 
	\begin{array}{cccc}
	\frac{1}{\sqrt{t}} & 0 & 0 & 0 \\
	0 & \sqrt{t} & 0 & \sqrt{\frac{1-t^2}{t}} \\
	-\sqrt{\frac{1-t^2}{t}} & 0 & \sqrt{t} & 0 \\
	0 & 0 & 0 & \frac{1}{\sqrt{t}} \\
	\end{array}\\
	\\
	\end{array}\right).
	\]


We first describe two immediate bounds for $ \gw(AB^{2n}(1) \cap W^{2n})$. The first comes from the largest Euclidean ball contained in the domain $B^{2n}(1) \cap A^{-1}W^{2n}$.
More precisely, let $E$ be the linear subspace such that $E\perp n_1$ and $ E \perp n_2$. Note that
	 $$ E = \C^{n-2} \times {\rm span}  \{ (t,0,\sqrt{1-t^2},0), (0,1,0,0) \},$$
	 and the corresponding symplectic orthogonal subspace is
	 $$ E^\omega = 0 \times {\rm span} \{ (0,-\sqrt{1-t^2},0,t) , (0,0,1,0) \}.$$
	 In addition, the orthogonal complement of $E^\omega$ is
	 $$ \left( E^\omega \right)^\perp = \C^{n-2} \times {\rm span} \{ (1,0,0,0), (0,t,0,\sqrt{1-t^2}) \}, $$
	 and since the orthogonal projection of $E \cap B^{2n}(1)$ to $\left( E^\omega \right)^\perp $ is
	 $$ \{(\lambda_1,\ldots,\lambda_{2n-4},\lambda_{2n-3}t,\lambda_{2n-2}t^2,0, \lambda_{2n-2} t\sqrt{1-t^2}) : \pi \sum_{i=1}^{2n-2} \lambda_i^2 \leq 1\}, $$
	 one has $B^{2n}(t^2) \subset B^{2n}(1) \cap A^{-1}W^{2n}$, which gives $ \gw(AB^{2n}(1) \cap W^{2n} ) \geq t^2$.

  \medskip 
The second lower bound is provided by 
  the largest Euclidean ball inside $A B^{2n}(1) \cap W$.
	 More precisely, note that $ \{x_n = 0, y_n=0\} \cap A B^{2n}(1) $ is of the form
	 \begin{align*}
	  &\{(x_1,y_1,\ldots,x_{n-2},y_{n-2},\frac{x_{n-1}}{\sqrt{t}},y_{n-1}\sqrt{t},0,0) | \pi \left( \sum_{i=1}^{n-2} x_i^2 + y_i^2 + \frac{x_{n-1}^2}{t^2} + y_{n-1}^2 \right) \leq 1 \} \\
	 &= \{(x_1,y_1,\ldots,x_{n-2},y_{n-2},x_{n-1},y_{n-1},0,0) | \pi \left( \sum_{i=1}^{n-2} x_i^2 + y_i^2 + \frac{x_{n-1}^2}{t} + \frac{y_{n-1}^2}{t} \right) \leq 1 \}, \\
	 \end{align*}
	 which is a $(2n-2)$-dimensional symplectic ellipsoid containing the ball $B^{2n-2}(t)$ of capacity $t$.
	 On the other hand, for the largest ball inside $A B^{2n}(1)$ one has
	 \begin{align*}
	 \text{inrad}(A B^{2n}(1)) = \min_{\|v\|=1/\sqrt{\pi}} \|A v\| = \frac{1}{\sqrt{\pi} \|A^{-1}\|_{\text{op}}} = \sqrt{\frac{t}{\pi\left(\sqrt{1-t^2} + 1\right)}}.
	 \end{align*}
	 Thus, $B^{2n}(\frac{t}{\sqrt{1-t^2} + 1}) \subseteq AB^{2n}(1) \cap W$, which gives $ \gw(B^{2n}(1) \setminus \Sigma^t_\eps ) \geq \frac{t}{\sqrt{1-t^2} + 1}$.
\vskip 5pt

Note that none of the above bounds dominates the other (see Figure~\ref{graph-function-f}). Moreover, in the first case  the constraint for embedding is coming from the intersection of the ball with the cylinder $A^{-1} W$, while in the second case, the constraint is due to the intersection of a ball with 
$A B^{2n}(1)$. 
A way to improve the two bounds above is to consider a symplectic linear image of the ball $S B^{2n}(r)$ such that the largest $r$ for which it fits in the ball $B^{2n}(1)$ is equal to the largest $r$ for which the image fits in the cylinder $A^{-1}W$.
	 For this, consider the following symplectic matrix for some parameters  $d_1,d_2  > 0$,
	 \[
	 S =\left(\begin{array}{c|c}
	 \mathds{1}_{{\mathbb R}^{2n-4}} & \bigzero\\
	 \hline \\*
	 \bigzero & 
	 \begin{array}{cccc}
	 d_1 & 0 & \sqrt{d_1 d_2-1} & 0\\
	 0 & d_2 & 0 & -\sqrt{d_1 d_2-1}\\
	 \sqrt{d_1 d_2-1} & 0 & d_2 & 0\\
	 0 & -\sqrt{d_1 d_2-1} & 0 & d_1
	 \end{array}\\
	 \\
	 \end{array}\right).
	 \]
	 Note that when $d_1=d_2=1$ this corresponds to the first embedding described above, and up to a unitary transformation (which does not change the embedding of the ball), there exist $d_1$ and $d_2$ which correspond to the second embedding, i.e., $A^{-1}$ has this form after multiplying with a unitary matrix. Now we choose the parameters $d_1$ and $d_2$ such that the projection of the image of the ball to $\left( E^\omega \right)^\perp$ is a symplectic ellipsoid, or, in other words, that the base of the relevant symplectic image of the cylinder is always a disc. 
  Moreover, as is often the case with similar optimization problems, we require that:
	 $$ \sup_{S B^{2n}(r) \subset B^{2n}(1)} r = \sup_{S B^{2n}(r) \subset A^{-1}W} r. $$
	 These two assumptions determine $d_1$ and $d_2$, and when plugging these solutions into $S$ we conclude that one can fit into $B^{2n}(1) \cap A^{-1}W$ a ball of capacity 
	 \begin{align*} 
        \sqrt{2\left(\frac{1}{t^2}-1\right)\left(\sqrt{1-t^2}-1\right)+1}.
	 \end{align*}
	 
\end{proof}	 

\begin{remark}
    Numerical tests suggest that the embedding above of a ball with capacity given by $\eqref{the-lower-bound-function}$ is the best embedding one can find using only linear symplectic maps. 
\end{remark}


\vskip10pt

\noindent Pazit Haim-Kislev \\
\noindent School of Mathematical Sciences, Tel Aviv University, Israel \\
\noindent e-mail: pazithaim@mail.tau.ac.il
\vskip 10pt

\noindent Richard  Hind \\
\noindent Department of Mathematics, University of Notre Dame, IN, USA. \\
\noindent e-mail: hind.1@nd.edu
\vskip 10pt

\noindent Yaron Ostrover \\
\noindent School of Mathematical Sciences, Tel Aviv University, Israel \\
\noindent e-mail: ostrover@tauex.tau.ac.il

\end{document}